\newtheorem{proposition}{\textbf{Proposition}}
\newtheorem{corollary}{\textbf{Corollary}}
\newtheorem{theorem}{\textbf{Theorem}}
\newcounter{remarkcount}
\newenvironment{remark}{\stepcounter{remarkcount}\textbf{Remark} \arabic{remarkcount}}{ \ $\diamond$}
\newcounter{assumpcount}
\newenvironment{assumption}{\refstepcounter{assumpcount}\textbf{Assumption} \arabic{assumpcount}}{ \ $\square$}
\title{\LARGE \bf
Multiple-Model Adaptive Control With Set-Valued Observers
}
\author{Paulo Rosa, Carlos Silvestre, Jeff S. Shamma and Michael Athans 
\thanks{This work was partially supported by Funda{\c c}{\~a}o para a Ci{\^e}ncia e a Tecnologia (ISR/IST pluriannual funding) through the POS\_Conhecimento Program that includes FEDER funds, by the PTDC/EEA-ACR/72853/2006 OBSERVFLY Project, and by the NSF project \#ECS-0501394. The work of P. Rosa was supported by a PhD Student Scholarship, SFRH/BD/30470/2006, from the FCT.}
\thanks{P. Rosa, C. Silvestre and M. Athans are with Institute for Systems and Robotics - Instituto Superior Tecnico,
				Av. Rovisco Pais, 1, 1049-001 Lisboa, Portugal
				{\tt\small prosa@isr.ist.utl.pt, cjs@isr.ist.utl.pt, athans@isr.ist.utl.pt}}
\thanks{M. Athans is also Professor of EECS (emeritus), M.I.T., USA}%
\thanks{J. S. Shamma is with Georgia Institute of Technology, School of Electrical and Computer Engineering, Atlanta, Georgia, United States of America
       {\tt\small shamma@gatech.edu}}%
}
\begin{document}

\maketitle
\thispagestyle{empty}
\pagestyle{empty}

\begin{abstract}
This paper proposes a multiple-model adaptive control methodology, using set-valued observers (MMAC-SVO) for the identification subsystem, that is able to provide robust stability and performance guarantees for the closed-loop, when the plant, which can be open-loop stable or unstable, has significant parametric uncertainty. We illustrate, with an example, how set-valued observers (SVOs) can be used to select regions of uncertainty for the parameters of the plant. We also discuss some of the most problematic computational shortcomings and numerical issues that arise from the use of this kind of robust estimation methods. The behavior of the proposed control algorithm is demonstrated in simulation. 
\end{abstract}

\section{Introduction}

In many realistic applications, the model of a system is only known up to some level of precision, due to uncertain parameters and unmodeled dynamics. 
Sometimes, a \emph{robust non-adaptive controller} is enough to achieve the desired closed-loop performance, e.g., to guarantee a given level of attenuation from the exogenous disturbances inputs to the performance outputs. If, however, the region of uncertainty is large and/or there are stringent performance requirements, such a non-adaptive controller may not exist. To overcome this problem, several solutions are proposed in the literature of adaptive control.

In this paper, we consider an important class of adaptive control architectures, referred to as multiple-model adaptive control (MMAC).
In particular, we are going to address the case where the process model has one parametric uncertainty, $p \in [p_\text{min}, \, p_\text{max}]$. Although several switching MMAC methodologies are available to solve this problem, they all share the same principles: in terms of design, we divide the (large) set of parametric uncertainty, $K$, into $N$ (small) subregions, $K_i$, $i=\{1,\cdots,N\}$ -- see Fig. \ref{fig:region_uncert} -- and synthesize a non-adaptive controller for them; in terms of implementation, we try to identify which region the uncertain parameter, $p$, belongs to, and then use the controller designed for that region. As explained in the sequel, the approach presented herein \emph{discards} the regions where the uncertain parameter, $p$, \emph{cannot} belong.

\begin{figure}[!htbp]
	\centering
		\includegraphics[width=1.75in]{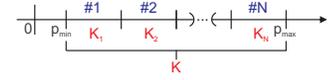}
	\caption{Uncertainty region, $K$, for the parameter $p$, split into $N$ subsets.}
	\label{fig:region_uncert}
\end{figure}

For a list of advantages of this type of control see, for instance, \cite{HespanhaLiberzonMorseAndersonBrinsmeadBruyneApr01}.
Several MMAC architectures have been proposed that provide stability and/or performance guarantees as long as a set of assumptions are met. For instance, \cite{ioannouMMACwM} uses a parameter estimator to select a controller, guaranteeing stability of the closed-loop.
Another MMAC, the so-called robust multiple-model adaptive control (RMMAC), introduced in \cite{sajjadIJAC} and references therein, uses a bank of Kalman filters for the identification system and a hypothesis testing strategy to select the controllers. For this case, although the simulation results indicate that high levels of performance are obtained, the only guarantees that can be provided are in terms of stability -- see \cite{prosaACC09}.
In \cite{shamma09}, calibrated forecasts are used to guarantee the stability of the closed-loop.
The theory of unfalsified control -- see \cite{safonov97} and references therein -- uses the controlled output error to decide whether the selected controller is delivering the desired performance or not.
The authors in \cite{angeli02} use a Lyapunov-based approach to select controllers, and hence require an in-depth knowledge of the plant.
Some of the assumptions required by these methodologies are often unnatural or cannot be verified in practice.

The approach in this paper is somewhat different to the above MMAC architectures. Instead of trying to identify the \emph{correct} region, \emph{i.e.}, the region where the uncertain parameter takes value, by hypothesis testing or parameter estimation, we exclude the \emph{wrong} regions. In other words, if the time-evolution of the inputs and outputs of the plant cannot be explained by a model with uncertain parameter $p$, such that $p \in K_i$, then region $K_i$ cannot be the one which the uncertain parameter belongs to. For dynamic uncertain models, described by differential inclusions, this can be posed as the problem of tracking a differential inclusion -- we remark that there is a rich set of references in the mathematical literature on differential inclusions as discussed in \cite{aubin,smirnov,blanchini1999}.

For linear dynamic models, the problem of ``disqualifying'' regions can be tackled using set-valued observers (SVOs) for linear systems -- see \cite{shammaSVF}. These observers consider that the initial state of the system is uncertain, that there are disturbances acting upon the plant, and that the measurements are corrupted with noise. Therefore, the state estimate, at each sampling time, is a set, instead of a single point.
In this paper, we generalize the observers in \cite{shammaSVF} to uncertain models and provide sufficient conditions for the convergence of the SVOs implemented in a non-ideal environment.

In summary, the approach in this paper is to use SVOs to decide which non-adaptive controllers should \emph{not} be selected. Similarly to other MMAC architectures, we use a bank of observers -- in our case, SVOs --, each of which \emph{tuned} for a pre-specified region of uncertainty. 
However, we utilize the observers to \emph{discard} regions, rather than to identify them. Using this strategy, we are able to provide robust stability and performance guarantees for the closed-loop, even when the model of the plant is uncertain. Moreover, we are able to handle both stable and unstable systems.

This paper is organized as follows: section \ref{sec:svo} illustrates, with an example, how set-valued observers can be used to select regions of uncertainty for the parameters of the plant, while stressing some of the computational shortcomings and numerical issues that arise from the use of this method; section \ref{sec:disc} deals with some of the issues related to the discretization of continuous-time uncertain models and points out some common problems in systems with different sampling times; section \ref{sec:mmacsvo} is devoted to the integration of set-valued observers with a multiple-model control architecture; finally, some general comments on the results obtained are provided in section \ref{sec:conc}.

\section{Set-Valued Observers}
\label{sec:svo}

\subsection*{Set-Valued Observers for Uncertain Plants}

The results presented in this section are a generalization of the ones in \cite{shammaSVF} to models with parametric uncertainties. Due to space limitations, some intermediate calculations are going to be omitted. Consider a plant described by the following discrete-time model:
\begin{equation}
\scriptsize
\left\{
\begin{array}{rcl}
x(k+1) &=& A(k) x(k) + A_\Delta(k)x(k) + L_d (k) d(k) + B(k) u(k)\\
y(k) &=& C(k) x(k) + n(k),
\end{array}\right.
\label{eq:dyn}
\end{equation}
\noindent where $x(0) \in X(0)$, $d(k)$ with $|d(k)|=\displaystyle\max_i |d_i(k)| \leq 1$ are the disturbances, $n(k)$ with $|n(k)|=\displaystyle\max_i |n_i(k)| \leq 1$ is the sensor noise, $u(k)$ is the control input, $y(k)$ is the measured output, $x(k)$ is the state of the system and
$
X(0) := \text{Set}(M_0, m_0),
$
where
$
\text{Set}(M, m) := \left\{ q : M q \leq m \right\}.
$
Furthermore, we assume that
$
A_\Delta(k) = A_1(k) \Delta_1(k) + \ldots+A_{n_A}\Delta_{n_A}(k),
$
\noindent for $|\Delta_i| \leq 1, i=1,\ldots,n_A$. The scalars $\Delta_i$, $i=\{1,\ldots,n_A\}$, represent parametric uncertainties, while the matrices $A_i$, $i=\{1,\ldots,n_A\}$, are the directions which those uncertainties act upon. Therefore, in \eqref{eq:dyn} we are accounting for models with parametric uncertainty, which is critical for adaptive control. For the sake of simplicity, we assume $n_A = 1$, and define $\Delta := \Delta_1$.
Then, $x(k)$ satisfies
\begin{align}
\scriptsize
\scriptstyle
M(k)
\begin{bmatrix}
x(k)\\ x(k-1)\\ z \\d
\end{bmatrix}
\leq
\begin{bmatrix}
B(k)u(k)\\-B(k)u(k)\\ \mathbf{1}\\ \mathbf{1}\\ \tilde{m}(k)\\ m(k-1)
\end{bmatrix} =: m(k),
\label{ineq:Aun}
\end{align}
\noindent for every $z = \Delta x(k-1)$, $|\Delta| \leq 1$, where $\mathbf{1}$ is a column vector of $1$'s with the adequate length, and where
$$
\scriptstyle
M(k) = \scriptsize \begin{bmatrix}
I & -A(k-1) & -A_1(k-1) & -L_d(k-1)\\
-I & A(k-1) & A_1(k-1) & L_d(k-1)\\
0 & 0 & 0 & I\\
0 & 0 & 0 & -I\\
\tilde{M}(k) & 0 & 0 & 0\\
0 & M(k-1) & 0 & 0
\end{bmatrix},
$$
\noindent
$\tilde{M}(k) = \begin{bmatrix}
C(k)\\ -C(k)
\end{bmatrix}$, and
$
\tilde{m}(k) = \begin{bmatrix}
\mathbf{1} + y(k)\\
\mathbf{1} - y(k)
\end{bmatrix}.
$
\noindent Therefore, for each value of $x(k-1)$, \eqref{ineq:Aun} must be verified for every $d$ such that $|d| \leq 1$, and every $z \in Z\left(x(k-1)\right)$, where $Z\left(x(k-1)\right):=\text{co}\left\{ x(k-1), -x(k-1) \right\}$, and $\text{co}\left\{p_1, \ldots, p_m \right\}$ is the smallest convex polytope containing the points $p_1, \ldots, p_m$, also known as convex hull of $p_1, \ldots, p_m$.
Notice that the sensor noise is accounted for in vector $\tilde{m}(k)$.

For each $x(k-1)$, we have $z = \Delta x(k-1)$, for all $\Delta \in \mathbb{R}$ such that $|\Delta| \leq 1$. Notice that
$
z = \Delta x(k-1), \, |\Delta|\leq 1 \Rightarrow |z_i| \leq |x_i(k-1)|
.
$
Thus, the constraints in $z$ can be relaxed to
\begin{equation}
|z_i| \leq |x_i(k-1)|, \, i=1,\ldots,n.
\label{ineq:zi}
\end{equation}

Hence, for $x \in \mathbb{R}^2$, $\Delta \in \mathbb{R}$, we have
\begin{equation}
x(k) \in \displaystyle\bigcup_{i=1,\ldots, 4} \text{Set}(M_i(k), m_i(k)) ,
\label{eq:regunc}
\end{equation}
\noindent where
$
m_i(k) = \left[m(k) \, 0\right]^\text{T}
$
\noindent and
$$
\scriptstyle
M_i(k) = 
\scriptsize
\left[
\begin{array}{cc}
\begin{array}{c}
I\\-I\\ 0\\ 0\\ \tilde{M}(k)\\ 0
\end{array}
&
\begin{array}{cc}
-A(k-1) & -A_1(k-1)\\
A(k-1) & A_1(k-1)\\
0 & 0\\
0 & 0\\
0 & 0\\
M(k-1) & 0
\end{array}
\\
0 & Q_i
\end{array}
\begin{array}{c}
-L_d(k-1)\\L_d(k-1)\\ I\\ -I\\ 0\\ 0\\ 0
\end{array}
\right],
$$
\noindent where
$$
\scriptsize
\begin{array}{l}
Q_1=\begin{bmatrix}-1 & 0 & 1 & 0\\ -1 & 0 & -1 & 0\\ 0 & -1 & 0 & 1\\ 0 & -1 & 0 & -1\end{bmatrix}, 
Q_2=\begin{bmatrix}-1 & 0 & 1 & 0\\ -1 & 0 & -1 & 0\\ 0 & 1 & 0 & -1\\ 0 & 1 & 0 & 1\end{bmatrix},
\\
Q_3=\begin{bmatrix}1 & 0 & -1 & 0\\ 1 & 0 & 1& 0\\ 0 & -1 & 0 & 1\\ 0 & -1 & 0 & -1\end{bmatrix}, 
Q_4=\begin{bmatrix}1 & 0 & -1 & 0\\ 1 & 0 & 1 & 0\\ 0 & 1 & 0 & -1\\ 0 & 1 & 0 & 1\end{bmatrix}.
\end{array}
$$

Although \eqref{eq:regunc}
\noindent does not define, in general, a convex set, it can be overbounded by a convex polytope, represented as $X(k) = \text{Set}(M(k), m(k))$. 

As a final remark, we stress that the computation of $X(k)$, based upon $X(k-1)$, can then be obtained by applying the \emph{Fourier-Motzkin elimination method} (see \cite{shammaSVF,keerthi1987}) to \eqref{eq:regunc}.

\subsection*{Computational and Numerical Issues}
\label{sec:compissues}

This subsection presents a discussion on some of the most important computational and numerical shortcomings of this methodology.

\subsubsection*{Fourier-Motzkin Elimination Method}
The first issue is related to the implementation of the so-called Fourier-Motzkin elimination method, described in \cite{keerthi1987}, that projects polyhedral convex sets on to subspaces. 

The Fourier-Motzkin algorithm leads to a set of linear inequalities, where some of them might be linearly dependent. This may be problematic, since the size of $M(k)$ and $m(k)$ could be increasing very fast with time. To overcome this problem, one has to eliminate the linearly dependent constraints. This can be done by solving several small linear programming problems at each sampling time, making the practical implementation of this type of observers somewhat computationally complex.

\subsubsection*{Union of Convex Polytopes}

We also have to implement the union of several polytopes. This task can be done by noting that, in our case,
$$
\scriptsize
\displaystyle\bigcup_{i=1,\ldots, 2^n} \text{Set}(M, m) \subseteq \text{co}\left\{ \text{Set}(M_1, m_1), \ldots, \text{Set}(M_{2^n}, m_{2^n})  \right\}
.
$$
In one- or two-dimensional spaces, this can be easily implemented. For higher dimensional spaces, one can resort, for instance, to the algorithm in \cite{Verge92anote}.

\subsubsection*{Numerical Approximation of Convex Polytopes}
Another possible shortcoming of the SVOs is related to the numerical approximations used during the computation of the set-valued estimations. In other words, since we do not have infinite precision in the computations that have to be carried out every sampling time to obtain the set-valued estimate $\hat{X}(k)$, the actual set where the state can take value, $X(k)$, need not be entirely contained inside $\hat{X}(k)$ -- see Fig. \ref{fig:seteps}.
Therefore, it may happen that the true state does not belong to $\hat{X}(k)$, and hence we may end up by possibly discarding the region which the parameter actually belongs to.

Thus, a very simple solution is to ``robustify'' the algorithm by slightly enlarging the set $\hat{X}(k)$, as illustrated in Fig. \ref{fig:seteps}. As long as the maximum error in the computation of the set $X(k)$ is known, we have, for every sampling time, $k$, a vector $\epsilon^*(k)$ such that
$
X(k) \subseteq \text{Set}\left(M(k), m(k)+\epsilon^*(k)\right).
$

\begin{figure}[!htbp]
	\centering
		\includegraphics[width=2in]{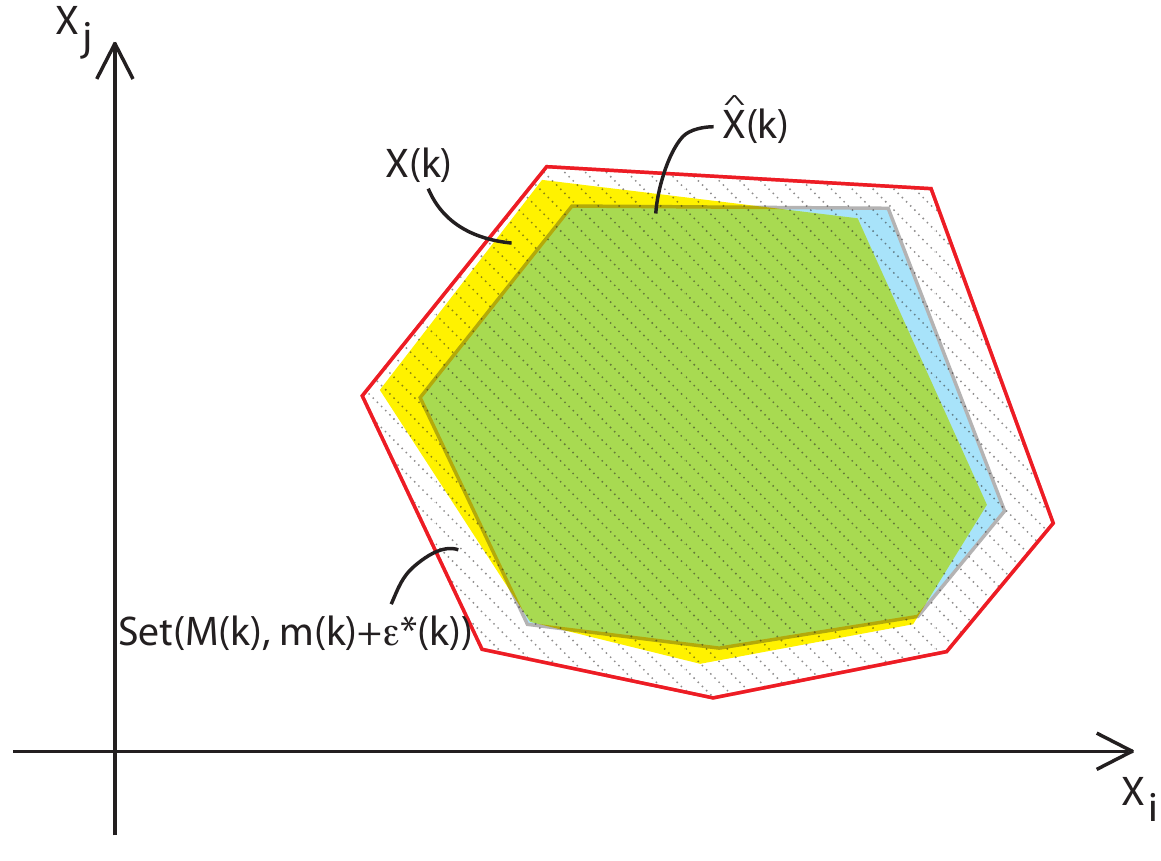}
	\caption{Overbound of set $\hat{X}(k)$ to include $X(k)$.}
	\label{fig:seteps}
\end{figure}

\subsubsection*{Convergence of the Set-Valued Observers}

Using an overbound to guarantee that we do not disqualify the correct estimator also has its shortcomings. Besides adding conservatism to the solution, it may be responsible for the unbounded increase with time of the area of the polytope of the set-valued estimate.

The remainder of this subsection is devoted to the derivation of sufficient conditions that guarantee that, if $X(k)$ is bounded, so does $\hat{X}(k)$.
Consider a plant described by \eqref{eq:dyn} with $A_\Delta (k) = 0$ and $X(0)$ bounded.

The first thing to realize is that the eigenvalues of $A$ must belong to the unit circle, so that we can guarantee that $X(k)$ is bounded. We assume the stability (and, hence, detectability) of the plant throughout the subsection, \emph{i.e.}, the eigenvalues of $A$ are \emph{inside} the unit circle. However, the SVOs can be used with unstable plants.

Let $\Psi(k)$ be the smallest hyper-cube centered at the origin that contains the set $X(k)$, as depicted in Fig. \ref{fig:bounderror}.
Define $\epsilon(k)$ as the maximum distance between a facet of $\Psi(k)$ and the corresponding facet of the estimate $\hat{\Psi}(k)$, as depicted in Fig. \ref{fig:bounderror}.
Next, we try to derive sufficient conditions to guarantee that $\hat{\Psi}(k)$ is bounded. It should be noticed that $\hat{\Psi}(k)$ can be interpreted as a rough approximation of $\hat{X}(k)$, in the sense that $\hat{X}(k) \subseteq \hat{\Psi}(k)$, which means that if $\hat{\Psi}(k)$ is bounded, so does $\hat{X}(k)$.

\begin{figure}[!htbp]
	\centering
		\includegraphics[width=1.5in]{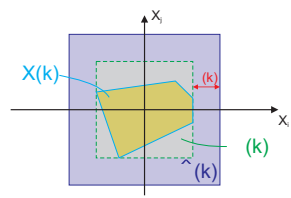}
	\caption{Bounding set $\Psi(k)$, corresponding estimate $\hat{\Psi}(k)$, and maximum numerical error $\epsilon(k)$.}
	\label{fig:bounderror}
\end{figure}

\begin{proposition}
Consider an \emph{asymptotically stable} plant described by \eqref{eq:dyn} with the aforementioned constraints. Suppose that the maximum numerical error (previously defined) at every sampling time is $\epsilon(k)$, with $\epsilon(k) \leq \epsilon^* |x_i(k)|$, for some $0\leq\epsilon^*< \infty$ and  for every $x(k) \in X(k)$. Further suppose that
$
\epsilon^* \leq 1 - \|A\|,
$
\noindent where $\|A\| := \displaystyle\sup_x \frac{|Ax|}{|x|}$.
Then, $\hat{\Psi}(k)$ is bounded.
\end{proposition}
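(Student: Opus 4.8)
The plan is to replace the full set recursion by a scalar recursion for the $\max$-norm radius of the bounding cube $\hat{\Psi}(k)$, and then to show that the hypothesis $\epsilon^* \le 1 - \|A\|$ forces the one-step growth factor of that recursion to be at most one. Since the excerpt already records that $\hat{X}(k) \subseteq \hat{\Psi}(k)$, boundedness of $\hat{\Psi}(k)$ is all that must be established. Write $\psi(k)$ and $\hat{\psi}(k)$ for the half-side-lengths of $\Psi(k)$ and $\hat{\Psi}(k)$, so that $\Psi(k) = \{ x : |x| \le \psi(k) \}$ and $\hat{\Psi}(k) = \{ x : |x| \le \hat{\psi}(k) \}$, with $|x| = \max_i |x_i|$ as in \eqref{eq:dyn}. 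Because $X(k)$ is assumed bounded, $\psi(k) \le \psi_{\max}$ for some finite $\psi_{\max}$ and all $k$.

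First I would record how one step of the dynamics acts on such a cube. Since $\|A\|$ is exactly the induced $\max$-norm, $|Ax| \le \|A\|\,|x|$, so $A$ maps $\{ |x| \le r \}$ into $\{ |x'| \le \|A\|\, r \}$; the terms $L_d(k) d(k)$ and $B(k) u(k)$, with $|d(k)| \le 1$, add only a set-independent constant $c$. The union of the $2^n$ polytopes in \eqref{eq:regunc}, the convex overbounding step, and the Fourier--Motzkin projection used to pass from $X(k-1)$ to $X(k)$ do not inflate the cube beyond this propagated size. The numerical computation then enlarges each facet by at most $\epsilon(k)$, and the relative-error hypothesis bounds this by $\epsilon^*$ times the radius of the set being represented; since $X(k) \subseteq \hat{X}(k) \subseteq \hat{\Psi}(k)$, this contributes at most $\epsilon^* \hat{\psi}(k-1)$ across the step. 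Collecting the contraction, the additive constant, and the error term yields
\begin{equation}
\hat{\psi}(k) \le \bigl( \|A\| + \epsilon^* \bigr)\, \hat{\psi}(k-1) + c .
\label{eq:recur}
\end{equation}

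Now I would apply the hypothesis: the bound $\epsilon^* \le 1 - \|A\|$ is exactly the statement that the growth factor $\rho := \|A\| + \epsilon^* \le 1$. When $\rho < 1$, unwinding \eqref{eq:recur} gives $\hat{\psi}(k) \le \rho^k \hat{\psi}(0) + c\,(1-\rho^k)/(1-\rho)$, a uniformly bounded sequence dominated by the fixed point $c/(1-\rho)$. Equivalently, tracking the excess $e(k) := \hat{\psi}(k) - \psi(k) \ge 0$ removes the common driving term and gives $e(k) \le \rho\, e(k-1) + \epsilon^* \psi_{\max}$, again bounded for $\rho < 1$. In either formulation $\hat{\psi}(k)$ stays finite, and therefore $\hat{X}(k) \subseteq \hat{\Psi}(k)$ is bounded, which is the claim.

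The step I expect to be the main obstacle is the rigorous justification of \eqref{eq:recur}, namely showing that the chain of set operations producing $\hat{X}(k)$ from $\hat{X}(k-1)$ genuinely expands the bounding cube by no more than the factor $\|A\|$ plus the relative error $\epsilon^*$, so that the numerical error enters \emph{additively} in the rate rather than multiplicatively. A secondary subtlety is the boundary case $\rho = 1$: the geometric estimate above is clean only for $\rho < 1$, so one must either invoke strict contraction or argue that the asymptotic stability of the plant keeps the driving contribution from accumulating. Once the recursion \eqref{eq:recur} is secured, the remaining argument is the elementary fixed-point bound sketched above.
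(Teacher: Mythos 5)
Your proposal is correct and takes essentially the same route as the paper's own proof: both collapse the set dynamics into a scalar max-norm recursion with growth factor $\|A\| + \epsilon^*$ plus a bounded additive disturbance term, and conclude boundedness from the hypothesis $\epsilon^* \leq 1 - \|A\|$. The only difference is bookkeeping of the additive term --- the paper absorbs the disturbance $\delta$ into part of the contraction margin for large states (its $j$-splitting trick, then letting $j \to \infty$), while you unwind the recursion to the fixed point $c/(1-\rho)$ --- and both treatments handle the boundary case $\rho = \|A\| + \epsilon^* = 1$ only informally.
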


\begin{proof}
Define
$
\delta := \displaystyle\sup_k |Ld(k)|.
$
Notice that $\delta$ is bounded, since $|d| \leq 1$. Then,
$
|x(k+1)| \leq \|A\| |x(k)| + \delta.
$
Since the eigenvalues of $A$ are inside the unit circle, we can find $\gamma > 0$ such that $\|A\| \leq 1-\gamma$.
Hence
\begin{align*}
|x(k+1)| &\leq (1-\gamma) |x(k)| + \delta \\ &= \left[
1-\left(\frac{j-1}{j}\right)\gamma
\right]|x(k)| - \frac{\gamma}{j} |x(k)| + \delta,
\end{align*}
\noindent for any $j \neq 0$.
Thus, for sufficiently large $|x(k)|$ and $j > 0$, we have
$
-\frac{\gamma}{j} |x(k)| + \delta = 0,
$
\noindent which leads to
$
|x(k+1)| \leq \left[1-\left(\frac{j-1}{j}\right)\gamma\right] |x(k)|.
$
However, to overcome the aforementioned numerical issues, we overbound this set by
$
|x(k+1)| \leq \left[1-\left(\frac{j-1}{j}\right)\gamma + \epsilon^* \right] |x(k)|.
$
If $\epsilon^* < \frac{j-1}{j} \gamma$, then $|x(k+1)| < |x(k)|$. Taking the limit as $j$ tends to infinity leads to the desired result.
\end{proof}

\begin{remark}
In an intuitive manner, the above implies that systems that drive their states to zero rapidly can have larger overbounds in the sets $X(k)$ than slower systems.
\end{remark}

\begin{corollary}
Consider a \emph{stable} plant described by \eqref{eq:dyn} with the aforementioned constraints. Suppose that the maximum numerical error at every sampling time is $\epsilon$, with $0 \leq \epsilon < \infty$. Then, $\hat{\Psi}(k)$ is bounded.
\end{corollary}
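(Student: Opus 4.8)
The plan is to reuse the recursive contraction estimate established in the proof of the previous Proposition, observing that the essential difference lies in the \emph{type} of numerical error. In the Proposition the error is \emph{relative}, $\epsilon(k) \leq \epsilon^* |x_i(k)|$, so it enters the recursion multiplicatively and amplifies large states; this is precisely why the smallness condition $\epsilon^* \leq 1 - \|A\|$ is needed, to keep the perturbed map contractive. Here the error $\epsilon$ is a fixed \emph{absolute} constant, so it enters \emph{additively}, exactly like a bounded disturbance, and no smallness condition should be required. Throughout I would use the standing assumption of the subsection that the eigenvalues of $A$ lie inside the unit circle.

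Concretely, I would first recall from the Proposition's proof that the true state obeys $|x(k+1)| \leq \|A\| |x(k)| + \delta$, with $\delta := \sup_k |L d(k)| < \infty$ since $|d| \leq 1$. Next I would track the size of the \emph{estimate}: letting $\hat{r}(k)$ denote the half-width of the bounding hyper-cube $\hat{\Psi}(k)$, the set-valued update propagates $\hat{\Psi}(k)$ through the dynamics (contracting by $\|A\|$ and inflating by $\delta$) and then enlarges each facet by at most the numerical error $\epsilon$. This yields the affine recursion
\begin{equation}
\hat{r}(k+1) \leq \|A\| \, \hat{r}(k) + \delta + \epsilon.
\label{ineq:corrrec}
\end{equation}
Equivalently, one may absorb the constant error into the disturbance, replacing $\delta$ by $\tilde{\delta} := \delta + \epsilon$ and invoking the Proposition with the relative-error coefficient set to zero; the required hypothesis then reduces to $0 \leq 1 - \|A\|$, which holds trivially for a stable plant.

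Since the plant is stable, there exists $\gamma > 0$ with $\|A\| \leq 1 - \gamma$, exactly as in the Proposition. Iterating \eqref{ineq:corrrec} then gives $\hat{r}(k) \leq (1-\gamma)^k \hat{r}(0) + \frac{\delta + \epsilon}{\gamma}$, which is uniformly bounded in $k$. Hence $\hat{\Psi}(k)$ is bounded, and therefore so is $\hat{X}(k) \subseteq \hat{\Psi}(k)$.

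I expect the only delicate step to be the bookkeeping that produces \eqref{ineq:corrrec}: one must argue that the per-sample numerical overbound contributes a bounded additive term $\epsilon$ (rather than a term growing with $\hat{r}(k)$, as in the relative-error case) and that this added mass cannot accumulate, because the factor $\|A\| < 1$ damps the contribution of every past step geometrically. Once \eqref{ineq:corrrec} is in place, boundedness is the standard fixed-point estimate for a contractive affine recursion, and the contrast with the Proposition --- namely that no smallness condition on $\epsilon$ is imposed --- is immediate.
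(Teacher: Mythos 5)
Your proposal is correct and follows essentially the same route as the paper: both reuse the Proposition's recursion, treat the absolute error $\epsilon$ as a bounded additive term (the paper even remarks afterwards that such errors can be modeled as an exogenous disturbance), and conclude boundedness from the stability margin $\|A\| \leq 1-\gamma$ with no smallness condition on $\epsilon$. The only cosmetic difference is that you iterate the affine recursion to an explicit geometric-series bound, whereas the paper argues that the map is contractive (rate $1-\rho$) once $|x(k)|$ is sufficiently large; these are interchangeable formulations of the same estimate.
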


\begin{proof}
Using a similar approach to that in the previous proposition, we get
$
|x(k+1)| \leq \left[1-\left(\frac{j-1}{j}\right)\gamma\right] |x(k)| + |\epsilon|,
$
\noindent for any $j>0$ and sufficiently large $|x(k)|$. Thus,
$
\left[1-\left(\frac{j-1}{j}\right)\gamma\right] |x(k)| + |\epsilon| \leq \left(1-\rho\right) |x(k)|,
$
\noindent with $\rho > 0$, which concludes the proof.
\end{proof}

\begin{remark}
We stress that this type of errors can be modeled as an exogenous disturbance. This is in agreement with Corollary $1$, since the only requirement is for the system dynamics matrix to be Hurwitz.
\end{remark}

As explained later, these shortcomings of the SVOs do not jeopardize the implementability of the algorithms. In fact, although some of the constraints seem very stringent from a practical point of view, they may not be relevant when used to \emph{discard} regions in a MMAC architecture.
For instance, all the calculations above are only valid for stable plants. Nevertheless, the MMAC architecture with SVOs, introduced in the sequel, can be used to control open-loop unstable plants. This topic will be subject to further discussion.

\subsection*{Mass-Spring-Dashpot Plant}
\label{sec:msd}

We are now going to evaluate the applicability of the SVOs using a simple example.
Consider the mass-spring-dashpot (MSD) plant depicted in Fig. \ref{fig:MSD}.
\begin{figure}[!htbp]
	\centering
		\includegraphics[width=1.25in]{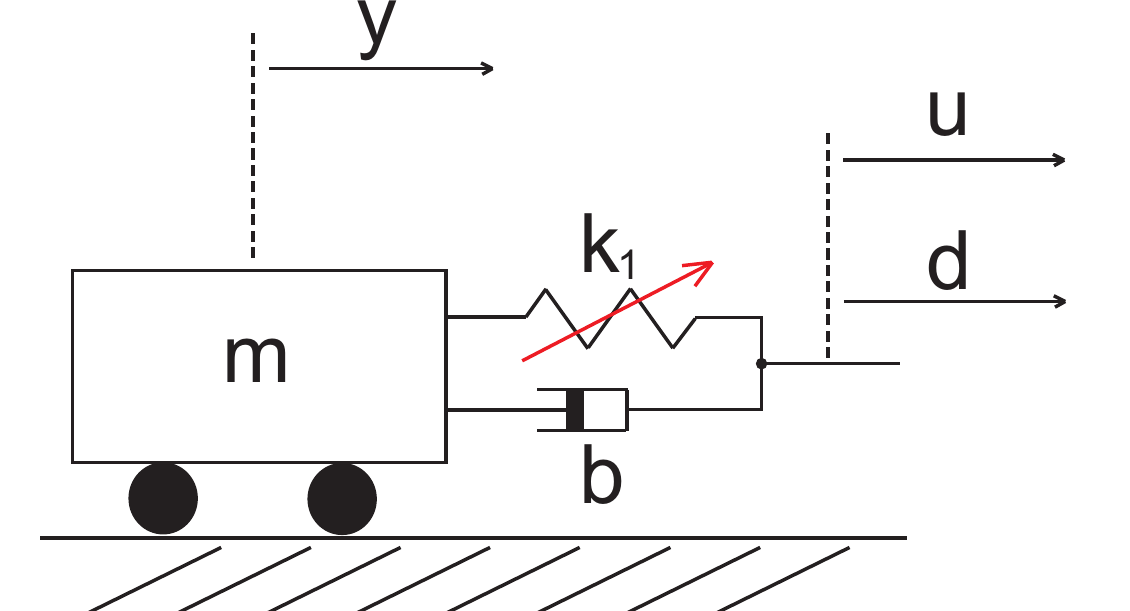}
	\caption{MSD system with uncertain spring constant, $k_1$.}
	\label{fig:MSD}
\end{figure}
The unknown spring constant is
$
k_1 \in [1, \, 5] \text{ N/m}.
$
At this point, we consider $u(\cdot) = 0$. The disturbances and the sensor noise were obtained from uniform random numbers generators, with $|d| \leq 1$, $|n| \leq 0.1$.

\subsection*{Simulations}

We recall that we are trying to derive a multiple-observer system that is able to discard uncertainty regions for the unknown parameters of the plant.
Hence, we start by designing an SVO for the aforementioned MSD plant, with uncertain parameter $k_1$, such that $k_1 \in K_1 := \left[1, \, 5\right]$ N/m. At each sampling time $k T_s$, the SVO produces a set-valued estimate of the state of the plant, by means of a set, $X(k)$. If the actual process is in agreement with the model of the MSD plant, for $k_1 \in K_1$, then the set $X(k)$ can never be empty. If, however, there is a mismatch between the model and the actual plant, caused, for instance, because $k_1 \notin K_1$, then the set $X(k)$ may be empty, for some integer $k \geq 0$.
Thus, we use the SVO -- designed for the uncertain MSD plant with $k_1\in\left[1, \, 5\right] \text{ N/m}$ -- to estimate the state of the plant for different values of $k_1$.

As illustrated in Fig. \ref{fig:test10cmean}, the number of iterations to discard a region may be arbitrarily large, and is mainly dependent upon the disturbances, the sensor noise and the mismatch between the model and the actual plant.

The expected number of iterations for each value of the spring constant, depicted in Fig. \ref{fig:test10cmean}, was obtained by averaging the number of iterations required in each simulation, using different seeds for the random generators used to emulate the disturbances and the sensor noise.
It is clear from the figure that, as one would expect, the closer the true value of the parameter is to the region used to design the observer, the longer it will take before we can discard that region.

\begin{figure}[tbp]
	\centering
		\includegraphics[width=2in]{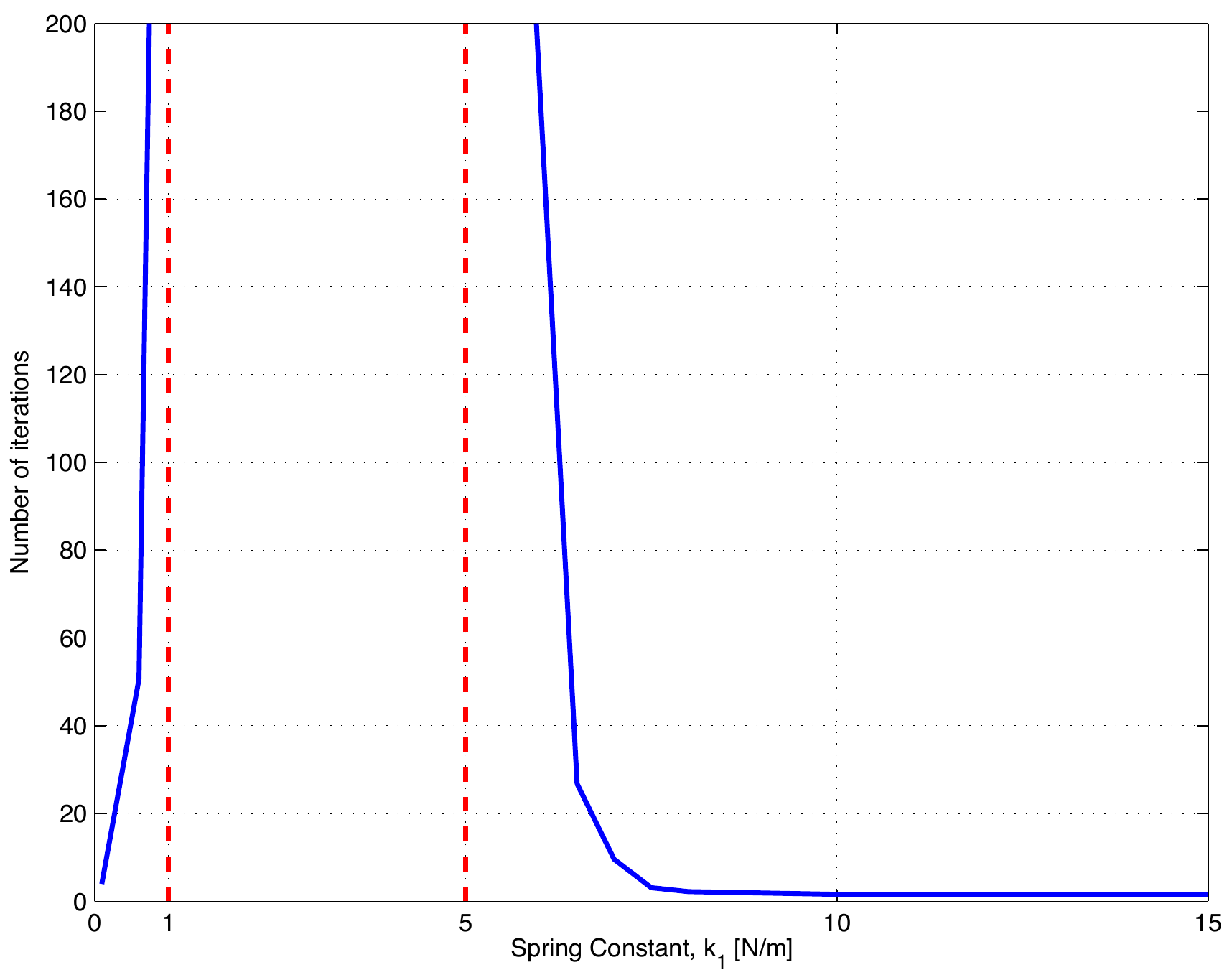}
	\caption{Expected value of the number of iterations required to disqualify region $[1,\,5]$ N/m as function of the spring constant, $k_1$. The dashed red lines bound the valid region for the parameter.}
	\label{fig:test10cmean}
\end{figure}

\section{Discretization Issues}
\label{sec:disc}

This section is devoted to an important topic when dealing with models of realistic systems. Several plants, like the MSD system previously described, are naturally modeled in continuous-time. 
Nonetheless, the aforementioned SVOs can only be used with discrete-time system models.
The usual methods for discretizing LTI systems cannot be readily applied to uncertain models. Therefore, we start this section by deriving discrete-time uncertain models for continuous-time uncertain plants.
However, again due to space limitations, we are going to mention only the most relevant steps.

Another important discretization issue is related to the fact that the SVOs are usually slow in terms of computation, when compared to the dynamics of the plant. In other words, it is required, in general, different sampling times for the control and decision subsystems of the multiple-model adaptive methodology presented in the sequel. Hence, the second part of this section tackles the problem of having different sampling times for the same plant -- the interested reader is also referred to \cite{meyer1975}.

\subsection*{Discretization of Continuous-Time Uncertain Models}

Consider a continuous-time LTI stable plant described by
\begin{equation}
\left\{
\begin{array}{rcl}
\dot{x}(t) &=& A x(t) + B u(t) + L d(t)\\
y(t) &=& C x(t) + n(t).
\end{array}
\right.
\label{eq:dyncont}
\end{equation}

Let $T$ be the sampling time and assume that $u(\cdot)$ and, for the sake of simplicity, $d(\cdot)$, are constant during each sampling interval.
Then, for $t = kT$, we can rewrite \eqref{eq:dyncont} as
\begin{equation*}
\left\{
\begin{array}{rcl}
x((k+1)T) &=& A^d x(kT) + B^d u(kT) + L^d d(kT)\\
y(kT) &=& C x(kT) + n(kT),
\end{array}
\right.
\end{equation*}
\noindent where
$
A^d := e^{AT},\,
B^d := \int_0^T e^{A\tau} B d\tau.
$
In the sequel, we use $k$ instead of $kT$, for the sake of notational simplicity.

Next, we suppose that $A$ is uncertain. Redefine
$
A := \bar{A} + A_1 \Delta, \text{ where } |\Delta| \leq 1.
$
Then, for large $T$,
\begin{equation}
A^d = e^{AT} \approx e^{\bar{A}T}\left( I + A_1 T \Delta + \frac{(A_1 T \Delta)^2}{2} + \cdots \right).
\label{eq:Ad}
\end{equation}

We recall that if $A_1$ is nilpotent, then \eqref{eq:Ad} becomes much simpler. In particular, suppose\footnote{Evaluating how restrictive is this assumption is still a topic of research.} that $A_1^2 = 0$.
Then
$
A^d = e^{AT} \approx e^{\bar{A}T} \left( I + A_1 T \Delta \right) = \bar{A}^d + A_1^d \Delta,
$
\noindent where
$
\bar{A}^d := e^{\bar{A}T},\,
A_1^d := e^{\bar{A}T} A_1 T.
$

For the discretized $B$ (and $L_d$) matrix with uncertainty, we can rewrite it as $B^d = \tilde{B}^d+d_b$, where $d_b$ represents a fictitious disturbance. However, due to lack of space, these calculations are omitted in this paper.

\subsection*{Sampling of Discrete-Time Models}

Due to the computational requirements associated with the set-valued observers (requiring the on-line elimination of several linear inequalities), different sampling times should be used for the control and for the estimation part of the algorithm presented in the next section. Therefore, let $T_s$ be the sampling time for the SVOs, and $T_c << T_s$ be the sampling time for the controllers. Assume that $T_s = m T_c$, where $m$ is a positive integer. We further define
$
A_s = e^{A T_s},
$
and
$
A_c = e^{A T_c}.
$
We consider that the model used for control is described by (omitting the disturbances)
\begin{equation*}
\left\{
\begin{array}{rcl}
x(k+1) &=& A_c x(k) + B_c u(k),\\
y(k) &=& C x(k) + n(k).
\end{array}
\right.
\end{equation*}

Hence,
$
x(k+m) = A_s x(k) + \bar{B} \bar{u}(k),
$
where
\begin{equation*}
\begin{array}{rcl}
\bar{B} &=& \begin{bmatrix}
A_c^{m-1} B_c & A_c^{m-2} B_c &\cdots& A_c B_c & B_c
\end{bmatrix},\\
\bar{u}(k) &=& \begin{bmatrix}
u(k) & u(k-1) & \cdots & u(k+m-1)
\end{bmatrix}^T.
\end{array}
\end{equation*}

Thus, by augmenting the control input, $u(\cdot)$, and using $\bar{n}(k) = n(m k)$, the model used by the SVOs, that is, the model with sampling time $T_s$, can be described by
\begin{equation*}
\left\{
\begin{array}{rcl}
\bar{x}(k+1) &=& A_s \bar{x}(k) + \bar{B} \bar{u}(k),\\
\bar{y}(k) &=& C \bar{x}(k) + \bar{n}(k).
\end{array}
\right.
\end{equation*}

\section{Multiple-Model Adaptive Control with Set-Valued Observers}
\label{sec:mmacsvo}

This section is devoted to the application of SVOs to multiple-model adaptive control (MMAC-SVO) of time-invariant systems.
We start by introducing the architecture of the control scheme and describing the suggested algorithm. Later on, we provide stability and performance guarantees for the closed-loop. A set of simulations illustrating the applicability of the MMAC-SVO is also presented.

\subsection*{Control Architecture}

Figure \ref{fig:MMAC_SVO} depicts the MMAC architecture adopted to use with the SVOs. 
For the sake of simplicity, suppose that the plant depends upon only one uncertain parameter, $k_1$. It is known, however, that $k_1 \in K$, for some set $K \subseteq \mathbb{R}$. The methodology very briefly presented next can be generalized for plants with a higher number of parametric uncertainties.

We follow very closely the method presented in \cite{sajjadIJAC}, to design the MMAC-SVO. For starters, we assume that a single and non-adaptive controller (referred to as global non-adaptive robust controller - GNARC) is not able to achieve the desired performance for the whole uncertainty region. Therefore, we need to divide this region, $K$, into several smaller regions, say $K_1, K_2, \cdots, K_N$, such that $K_1 \bigcup K_2 \bigcup \cdots \bigcup K_N = K$. In order to do so, we first compute the maximum (ideal) performance that we can achieve. This is obviously the case where we know the exact value of the otherwise uncertain parameter, $k_1$. To the controllers designed for fixed values of the uncertain parameter, $k_1$, we call FNARC (fixed non-adaptive robust controller), using the same terminology as in \cite{sajjadIJAC}.

\begin{figure}[!htbp]
	\centering
		\includegraphics[width=0.4\textwidth]{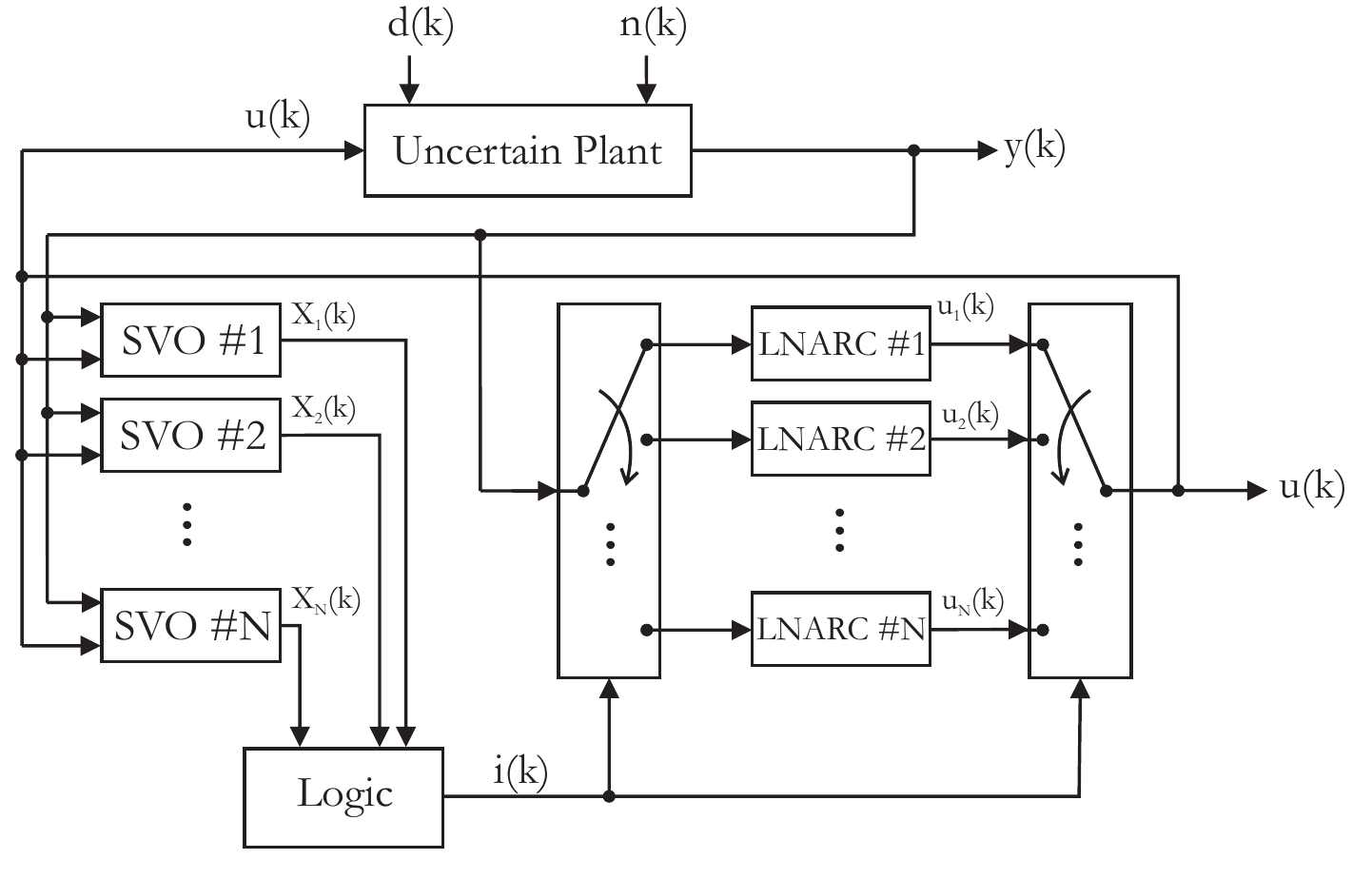}
	\caption{Multiple-model adaptive control with set-valued observers (MMAC-SVO) architecture. $X_i$ is the set estimated by SVO \#i.}
	\label{fig:MMAC_SVO}
\end{figure}

The design proceeds by defining the desired performance for the closed-loop, when the parameter $k_1$ is uncertain. Without loss of generality, we assume that, for each value of the uncertain parameter, $k_1$, we want the performance of the MMAC-SVO not to be smaller than a fixed percentage of the corresponding FNARC. This naturally leads to the splitting of set $K$ into smaller subsets, as previously mentioned. 

For each of these subsets $K_i$, $i=\{1,\cdots,N\}$, a controller referred to as LNARC (local non-adaptive robust controller) is synthesized. We argue that, for any realistic application, these LNARCs should also be robust to plant model error in addition to parameter $k_1$. Thus, using an LFT representation for the system may be useful if, for instance, mixed-$\mu$ controllers are used 
-- see \cite{multivariablefeedback,zhou}.

Furthermore, an SVO should also be designed for each of the subsets, using the methodology previously introduced.
It is important to stress that the SVO for region $K_i$ should be such that, whenever $k_1 \in K_i$, the set $X_i(k)$ is never empty.

\subsection*{Description of the Algorithm}

Having described the architecture and the design procedure of the MMAC-SVO, we propose an algorithm to select the appropriate controller at each sampling time. In reference to Fig. \ref{fig:MMAC_SVO}, this subsection is devoted to the description of the behavior of the block entitled \emph{Logic}. 

Several approaches can be used to tackle this decision problem. In this paper, we suggest a very simple solution that takes into account the fact that, whenever $k_1 \in K_i$, the SVO $\#i$ does never fail, \emph{i.e.}, $X_i(k)$ is never empty. On the other hand, if $k_1 \notin K_i$, then it can happen that, for some $t_0$, we have $X_i(k) = \emptyset$, for all $k \geq t_0$.
Based upon these facts, we suggest the algorithm depicted in Fig. \ref{fig:alg1}.

\begin{figure}[!htbp]
	\centering
		\includegraphics[width=2in]{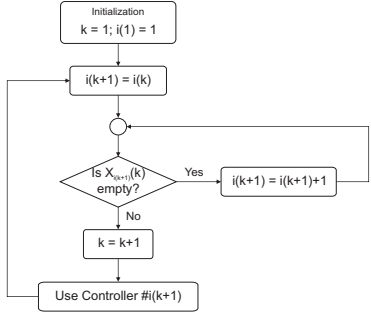}
	\caption{Algorithm proposed to select an appropriate controller at each sampling time.}
	\label{fig:alg1}
\end{figure}

We stress that the main advantage of this algorithm is that we can guarantee robust stability and performance for the closed-loop, as shown in the sequel.

\subsection*{Proof of Stability}

The proof of stability of the closed-loop is based upon the fact that the SVOs are non-conservative, \emph{i.e.}, if $X_i(k) \neq \emptyset$, then the output of the plant, $y(k)$, can be justified by the previous input and outputs, and for some $k_1 \in K_i$. This statement will be explained more formally in the sequel. Once again, for the sake of simplicity, we use a plant with only one uncertain parameter.

Consider a system described by \eqref{eq:dyn} with $n_A = 1$.

Notice that the nominal plant ($\Delta_1(k) = 0$) is a linear time-invariant system.
Further consider a partitioning of the uncertainty set, $K$, as described in the previous subsection ($K = K_1 \bigcup K_2 \bigcup \cdots \bigcup K_N$).
Moreover, we posit the following assumptions.

\begin{assumption}
For each of the uncertainty subsets, $K_i, i=\{1,\cdots,N\}$, there is at least one LNARC, referred to as $C_i(\cdot)$, that is able to stabilize any plant with model \eqref{eq:dyn} and $k_1 \in K_i$.
\label{assump:stabctrl}
\end{assumption}

Let $\tilde{X}(y(k)) = \left\{
x : y(k) = C x + n, |n| \leq 1
\right\},
$ and
$
\bar{X}_i(u(k), k) =\left\{\scriptstyle
x : x = A w + B u(k) +L_d d, w \in X_i(k-1), |d| \leq 1
\right\}.
$

\begin{assumption}
The solution of the SVO $\#i$ is given by
$
X_i(y(k),u(k),k) = \bar{X}_i(u(k),k) \cap \tilde{X}(y(k)).
$
In words, the solutions of the SVOs are non-conservative.
\label{assump:noncons}
\end{assumption}

\begin{assumption}
For any sampling time $k$,
$
x(k) \in X_i(y(k),u(k),k),
$
\noindent for some $i \in \{1,\cdots,N\}$.
\label{assump:svoall}
\end{assumption}

Notice that Assumption \ref{assump:svoall} guarantees that the true plant model belongs to the family of \emph{legal} models of at least one of the SVOs.

\begin{assumption}
The closed-loop system with any of the $N$ controllers does not have a finite escape time.
\label{assump:finitetime}
\end{assumption}

We stress that Assumption \ref{assump:finitetime} is automatically satisfied if all the $N$ controllers are LTI systems -- see \cite{prosaACC09}.

\begin{theorem}
\label{theo:stab}
Suppose Assumptions \ref{assump:stabctrl}$-$\ref{assump:finitetime} are satisfied. Then, using the algorithm previously described, the closed-loop system is input/output stable.
\end{theorem}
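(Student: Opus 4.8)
The plan is to argue in two stages: (i) show that the controller chosen by the \emph{Logic} block eventually stops switching and rests on a controller whose set-valued observer never empties, and (ii) use non-conservativeness to reinterpret the actual input/output record as a genuine trajectory of the robustly stabilized ``virtual'' closed loop attached to that observer, which is therefore bounded.

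First I would record a monotonicity fact about the observers. Because $X_i(k)$ is built recursively from $X_i(k-1)$ through the propagate-and-intersect rule of Assumption \ref{assump:noncons} ($X_i(k)=\bar{X}_i(u(k),k)\cap\tilde{X}(y(k))$ with $\bar{X}_i$ generated from $X_i(k-1)$), an empty set at some $t_0$ forces $X_i(k)=\emptyset$ for all $k\ge t_0$; hence the family of ``alive'' observers is non-increasing in $k$. Assumption \ref{assump:svoall} guarantees that at each $k$ the true state lies in some $X_i(k)$, so at least one observer is alive at every instant. A non-increasing sequence of non-empty subsets of the finite index set $\{1,\ldots,N\}$ stabilizes to a non-empty limit, so there is an index $j$ with $X_j(k)\neq\emptyset$ for all $k$. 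Since the algorithm abandons the current controller only when its observer empties, and each observer can trigger at most one such abandonment, at most $N-1$ switches occur; thus there is a finite time $T^\ast$ after which a single fixed controller $C_j$ is used, with $X_j(k)\neq\emptyset$ for all $k\ge T^\ast$.

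The core step is boundedness for $k\ge T^\ast$. Here I would invoke Assumption \ref{assump:noncons} in the form: $X_j(k)\neq\emptyset$ means that at every step the measured $y(k)$ is explained by a model-$j$ update $x=Aw+Bu(k)+L_d d$ with $w\in X_j(k-1)$, together with admissible $|d|\le1$, $|n|\le1$ and an admissible $|\Delta(k)|\le1$ (i.e.\ some, possibly time-varying, $k_1\in K_j$). Consequently the recorded pair $(u,y)$ is itself an admissible closed-loop trajectory of the uncertain model \eqref{eq:dyn} with $k_1\in K_j$ in feedback with $C_j$: the controller maps $y\mapsto u$ by construction, while consistency supplies a plant trajectory in the family mapping that same $u$ back to $y$. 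By Assumption \ref{assump:stabctrl} this interconnection is robustly stable over $K_j$, and because \eqref{eq:dyn} already carries the time-varying term $A_\Delta(k)$, the guarantee covers precisely the time-varying realization just exhibited. Robust stability furnishes a uniform bound on the response to the bounded exogenous signals $|d|\le1$, $|n|\le1$ (and any bounded command), so $y(k)$ and $u(k)$ remain bounded for $k\ge T^\ast$.

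Finally I would dispose of the transient $0\le k<T^\ast$. Since $T^\ast$ is finite and, by Assumption \ref{assump:finitetime}, none of the $N$ closed loops has a finite escape time, all signals stay bounded on this finite horizon; chaining this with the uniform bound for $k\ge T^\ast$ gives boundedness of every signal, i.e.\ input/output stability. I expect the main obstacle to be making the certainty-equivalence step rigorous: one must verify that ``$X_j$ never empties'' genuinely produces an admissible time-varying realization of the model-$j$ family whose output equals the true $y$, and that the robust-stability guarantee of Assumption \ref{assump:stabctrl} is strong enough --- uniform over admissible $\Delta(\cdot)$ and delivering a finite gain from $(d,n)$ to the signals --- to convert ``the trajectory is feasible for a robustly stable family'' into an actual bound. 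The switching and transient bookkeeping (finitely many switches, absence of finite escape time) is comparatively routine.
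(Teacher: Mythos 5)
Your proposal is correct and follows essentially the same route as the paper's own proof: first, finitely many switches (an emptied observer stays empty, Assumption \ref{assump:svoall} keeps at least one alive, so a fixed controller $C_j$ is eventually used), then Assumption \ref{assump:noncons} to reinterpret the recorded input/output pair as a trajectory of the model family with $k_1 \in K_j$, which Assumption \ref{assump:stabctrl} guarantees is stabilized by $C_j$, with Assumption \ref{assump:finitetime} covering the transient. The only difference is cosmetic: the paper phrases the second stage as a contradiction (assuming $|y(k)|\rightarrow\infty$), whereas you argue boundedness directly.
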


\begin{proof}
We first show that the number of switchings is finite. Then, by contradiction, we prove that the closed-loop is input/output stable.

If $X_i(y,u,k) = \emptyset$, then $y(k)$ cannot be explained by the uncertain plant model used by SVO $\#i$. Thus, we switch to a different controller.
According to Assumption \ref{assump:svoall}, at least for one value of $j \in \{1,\cdots,N\}$, $x(k) \in X_j(y,u,k), \forall_k$. Hence, the number of switchings is finite and smaller than $N$. In other words, for some large enough $t_0$, the controller selected at time instant $k \geq t_0$ is always the same.

Next, suppose that $|y(k)|\rightarrow\infty$ as $k\rightarrow\infty$. Let $C_j(\cdot)$ be the controller selected for $k \geq t_0$. 
According to Assumption \ref{assump:noncons}, there is a sequence $(d(k), n(k))$, with $|n| \leq 1$ and $|d| \leq 1$, such that $y(k)$ can be obtained with model \eqref{eq:dyn} with $k_1 \in K_j$.
However, according to Assumption \ref{assump:stabctrl}, controller $C_j(\cdot)$ is able to stabilize any plant with $k_1 \in K_j$. Since $|d|$ and $|n|$ are bounded, and according to Assumption \ref{assump:finitetime}, there cannot exist a sequence $(d(k), n(k))$ such that $|y(k)|\rightarrow\infty$, which is a contradiction.
\end{proof}

\begin{remark}
We stress that the proofs of convergence of the SVOs, presented in section \ref{sec:svo}, are only valid for stable plants. This means that, under suitable conditions, SVO $\#i$ must converge if LNARC $C_i(\cdot)$ is able to stabilize the plant, since in that case the closed-loop system is stable. If, however, controller $C_i(\cdot)$ is not able to stabilize the plant, then SVO $\#i$ may not converge. Nonetheless, this is not a shortcoming, since a non-converging SVO of an unstable plant is always \emph{discarded}, as shown in the proof of Theorem \ref{theo:stab}. Therefore, a LNARC that is not able to stabilize the plant (in the input/output sense) is always \emph{discarded}, as demonstrated above.
\end{remark}

\subsection*{Proof of Performance}
Finally, we provide performance guarantees for the closed-loop system using the MMAC-SVO. For that, we need the following additional assumptions.
We define $\xi(k) = [d(k), \, n(k)]$ as the performance input vector, and $z(k)$ as the performance output vector. In the sequel, we consider that $z(k) \equiv y(k)$, that is, the performance and measurement outputs are the same.

\begin{assumption}
Let $k_1 \in K_i$. Then, controller $C_i(\cdot)$ stabilizes the plant and guarantees a closed-loop $\mathcal{L}_2$-induced norm from the performance inputs to the performance outputs smaller than $\gamma$.
\label{assump:Cperf}
\end{assumption}

\begin{remark}
Assumption \ref{assump:Cperf} ensures the existence of a non-adaptive controller for each value of $k_1 \in K$ that provides the specified performance requirements.
\end{remark}

\begin{assumption}
Let $k_1 \in K_i$. Then

$
\underset{\underset{x(0) \in X(0)}{|n|\leq 1, |d| \leq 1}}{\forall} \quad \underset{U^*,t_2}{\exists} \quad : X_j(t) = \emptyset, \underset{ \underset{u^*(k) \in U^*}{\underset{t \geq t_2}{j \neq i}}}{\forall}.$
\label{assump:ustar}
\end{assumption}

\begin{remark}
Assumption \ref{assump:ustar} can be stated as follows: for every initial state in $X(0)$, there is a set of input sequences, $U^*$, such that, for any $u^*(k) \in U^*$, $k = \{1, 2, \cdots \}$, and for sufficient large $k$, all the SVOs except one (SVO $\#i$) have failed. This assumption can also be seen as a distinguishability condition.
\end{remark}

\begin{theorem}
Suppose Assumptions \ref{assump:noncons}$-$\ref{assump:ustar} are satisfied. Further suppose $u(k) \in U^*$. Then, for some large enough time instant $t_1$, the $\mathcal{L}_2$-induced norm from the performance inputs to the performance outputs, for $k \geq t_1$, is smaller than $\gamma$.
\label{theo:perf}
\end{theorem}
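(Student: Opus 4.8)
The plan is to reduce the switching, adaptive closed loop to a single fixed linear time-invariant interconnection after a finite time, and then to invoke the performance bound of Assumption \ref{assump:Cperf} directly. Let $i$ denote the true index, \emph{i.e.}, the one with $k_1 \in K_i$. The argument would proceed in three steps: first show that the logic commits permanently to controller $C_i(\cdot)$; then identify the resulting closed loop as an LTI system whose $\mathcal{L}_2$-gain is below $\gamma$; and finally conclude the stated tail bound. Note that Assumption \ref{assump:stabctrl} is \emph{not} among the hypotheses here, so the finite-switching argument must be re-derived rather than imported from Theorem \ref{theo:stab}.

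First I would show that SVO $\#i$ never fails while every other SVO eventually does. Since $k_1 \in K_i$, the realized state trajectory is generated by a model lying in the family of legal models of SVO $\#i$; hence, by the non-conservativeness of the observers (Assumption \ref{assump:noncons}), $x(k) \in X_i(y(k),u(k),k)$ for every $k$, so that $X_i(k) \neq \emptyset$ for all $k$ and $C_i(\cdot)$ is never discarded. Because $u(k) \in U^*$ by hypothesis, Assumption \ref{assump:ustar} then supplies a time $t_2$ with $X_j(t) = \emptyset$ for all $j \neq i$ and all $t \geq t_2$. By the selection logic of Fig.~\ref{fig:alg1}, a controller is discarded as soon as its SVO returns the empty set, so every $C_j(\cdot)$ with $j \neq i$ is permanently discarded by $t_2$. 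Consequently there is an instant $t_1 \geq t_2$ after which the logic selects $C_i(\cdot)$ and never switches again.

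Next I would observe that, for $k \geq t_1$, the interconnection is the fixed plant \eqref{eq:dyn} with $k_1 \in K_i$ in feedback with $C_i(\cdot)$, which is LTI. Assumption \ref{assump:Cperf} then guarantees that this interconnection is stable and that its $\mathcal{L}_2$-induced norm from $\xi = [d,\,n]$ to $z \equiv y$ is smaller than $\gamma$, which is exactly the asserted bound on the tail $k \geq t_1$.

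The hard part will be reconciling this tail induced norm with the standard zero-initial-state induced norm of Assumption \ref{assump:Cperf}: the switching leaves a nonzero state $x(t_1)$, so the response on $[t_1,\infty)$ is the superposition of the forced response, whose gain is the LTI induced norm $< \gamma$, and a zero-input transient excited by $x(t_1)$. Because $C_i(\cdot)$ is stabilizing (Assumption \ref{assump:Cperf}) and the closed loop has no finite escape time up to settling (Assumption \ref{assump:finitetime}), $x(t_1)$ is finite and this transient decays to zero in $\ell_2$. I would make this rigorous by splitting $y$ on $[t_1,\infty)$ into its forced and zero-input parts and absorbing the decaying transient, so that the tail gain is governed by the LTI induced norm and the $\gamma$ bound holds; this is the only step that requires more than a direct combination of the assumptions.
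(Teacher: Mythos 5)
Your proposal is correct and follows essentially the same route as the paper's own proof: Assumption \ref{assump:ustar} (with $u(k) \in U^*$) forces every SVO except $\#i$ to fail after a finite time, so the selection logic commits permanently to $C_i(\cdot)$, and Assumption \ref{assump:Cperf} then yields the $\mathcal{L}_2$ bound on the tail. You are in fact somewhat more careful than the paper on two points it silently elides---re-deriving the commitment step without leaning on Assumption \ref{assump:stabctrl} (which is not a hypothesis of this theorem), and flagging the nonzero state $x(t_1)$ inherited at the final switch, whose interaction with the zero-initial-state induced norm the paper's proof does not address at all.
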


\begin{proof}
According to the proof of Theorem \ref{theo:stab}, there is a time instant $t_0$ such that, for $k \geq t_0$, the selected controller is $C_j(\cdot)$, for some $j \in \{1,\cdots,N\}$. Hence, we only need to prove that this controller guarantees a closed-loop $\mathcal{L}_2$-induced norm from the performance inputs to the performance outputs smaller than $\gamma$. According to Assumption \ref{assump:Cperf}, such controller exists and, if $k_1 \in K_i$, then $C_j(\cdot) \equiv C_i(\cdot)$, \emph{i.e.}, $j = i$.
Thus, if, for a large enough time instant, $k$, all but SVO $\#i$ fail, the proof is complete. This, in turns, is guaranteed by Assumption \ref{assump:ustar}.

\end{proof}

Assumption \ref{assump:ustar} is somewhat unnatural and hard to verify in general. Therefore, we can discard it at the cost of getting a weaker performance index.

\begin{theorem}
Suppose Assumptions \ref{assump:noncons}$-$\ref{assump:Cperf} are satisfied and that the closed-loop system, for fixed controllers, is linear. Further suppose there exists $\bar{\xi}$, such that $|\xi(k)| \leq \bar{\xi}$. Then, we have $|z(k)| \leq \gamma \bar{\xi}$ as $k~\rightarrow~\infty$.
\label{theo:perf2}
\end{theorem}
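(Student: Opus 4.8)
The plan is to collapse the time-varying controller selection to a single fixed closed loop and then bound the output not through the \emph{actual} interconnection, but through a \emph{consistent} one for which a performance certificate is available. First I would invoke the finite-switching mechanism from the proof of Theorem \ref{theo:stab}: Assumption \ref{assump:svoall} guarantees that at least one SVO never fails, and since a failed SVO stays failed, at most $N$ switchings occur. Hence there is a time $t_0$ after which the selected controller is a fixed $C_j(\cdot)$, $j \in \{1,\cdots,N\}$. The stability half of that same argument --- with the role played there by Assumption \ref{assump:stabctrl} now supplied by the stronger Assumption \ref{assump:Cperf} --- shows the resulting loop is input/output stable, and by hypothesis it is linear; so for $k \geq t_0$ the map $\xi(\cdot) \mapsto z(\cdot)$ is that of a stable LTI system.

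The delicate point, and the reason a \emph{weaker} conclusion than in Theorem \ref{theo:perf} is unavoidable, is that without the distinguishability Assumption \ref{assump:ustar} I cannot assert $j = i$: the retained controller need not be the one designed for the true region. The way around this is Assumption \ref{assump:noncons}. Because SVO $\#j$ does not fail for $k \geq t_0$, non-conservativeness means the measured pair $(u,y)$ is reproduced by model \eqref{eq:dyn} for some $k_1 \in K_j$ driven by admissible disturbance and noise $(\hat d, \hat n)$ bounded by the same $\bar\xi$. In other words, the observed $z \equiv y$ is a legitimate output of the interconnection of $C_j(\cdot)$ with a plant whose parameter lies in $K_j$, and by Assumption \ref{assump:Cperf} applied to $K_j$ this fictitious loop is stable with $\mathcal{L}_2$-induced norm from $\xi$ to $z$ at most $\gamma$. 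I would then propagate this certificate: since the fictitious loop is linear and stable, the effect of its state at $t_0$ decays, and asymptotically $z$ is the forced response, through a gain-$\gamma$ operator, to an input bounded by $\bar\xi$, giving $|z(k)| \leq \gamma\bar\xi$ as $k \to \infty$.

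The main obstacle is the last inequality. The constant $\gamma$ is an $\mathcal{L}_2$-induced (equivalently $\mathcal{H}_\infty$) gain, whereas the claim is a pointwise $\ell_\infty$ estimate, and in general the peak-to-peak gain of a stable LTI system is not bounded by its $\mathcal{H}_\infty$ norm. Closing this gap rigorously needs either a steady-state (single-frequency) reading of the bound, for which $\gamma$ is exactly the gain from input amplitude to asymptotic output amplitude, or a peak-gain strengthening of Assumption \ref{assump:Cperf}. A secondary item to check is that the SVO normalization of the fictitious $(\hat d, \hat n)$ is aligned with the physical bound $\bar\xi$ on $\xi$, so that the certificate of the consistent loop transfers to the true data without an extra scaling, and that the pre-$t_0$ transient genuinely vanishes in the limit.
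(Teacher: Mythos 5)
The paper does not actually write this proof out: it states only that it ``follows closely the proof for stability'' and is omitted for lack of space. Your reconstruction is precisely that intended route --- finite switching inherited from the proof of Theorem \ref{theo:stab}, a fixed terminal controller $C_j(\cdot)$ after some $t_0$, Assumption \ref{assump:noncons} to exhibit a fictitious but consistent plant with $k_1 \in K_j$ driven by admissible $(\hat d, \hat n)$, and Assumption \ref{assump:Cperf} to certify that fictitious loop with gain $\gamma$ --- so on approach you agree with the paper, and you supply considerably more detail than it does.

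The obstacle you flag at the end is genuine, and it is a defect of the theorem statement itself rather than of your argument. Assumption \ref{assump:Cperf} provides an $\mathcal{L}_2$-induced gain, while the conclusion $|z(k)| \leq \gamma \bar{\xi}$ is a pointwise amplitude bound on the output of a system whose input is only known to be bounded in amplitude; the peak-to-peak ($\ell_\infty$-induced) gain of a stable LTI system is not in general dominated by its $\mathcal{H}_\infty$ norm, so the implication does not hold verbatim. The paper glosses over exactly this point: the Remark that follows the theorem manipulates $\gamma \bar{\xi}_1$ and $\gamma \bar{\xi}_2$ as amplitude gains for both Theorem \ref{theo:perf} and Theorem \ref{theo:perf2}, which is only meaningful under the steady-state/amplitude reading of $\gamma$ that you propose, or under a restatement of Assumption \ref{assump:Cperf} in terms of a peak-to-peak induced norm. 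Either repair closes the gap; without one, neither your proof nor the paper's omitted one is rigorous as stated. Your secondary caveats (alignment of the SVO normalization of $(\hat d, \hat n)$ with the physical bound $\bar{\xi}$, and decay of the pre-$t_0$ transient, which needs the linearity-plus-stability hypothesis) are also the right details to check and are nowhere addressed in the paper.
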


\begin{remark}
We stress that the performance guarantees provided by Theorem \ref{theo:perf2} are weaker than those of Theorem \ref{theo:perf}. To see this, let $|\xi| \leq \bar{\xi}_1$. However, suppose that we only know an upper bound for $\bar{\xi}_1$, referred to as $\bar{\xi}_2$, such that $\bar{\xi}_2 > \bar{\xi}_1$. Then, if Assumptions  \ref{assump:noncons}$-$\ref{assump:ustar} are satisfied, Theorem \ref{theo:perf} guarantees that $|z| \leq \gamma \bar{\xi}_1$. If Assumptions \ref{assump:noncons}$-$\ref{assump:Cperf} are satisfied, Theorem \ref{theo:perf2} guarantees that $|z| \leq \gamma \bar{\xi}_2 >  \gamma \bar{\xi}_1$ (as $k~\rightarrow~\infty$).
\end{remark}

\begin{proof}
The proof of Theorem \ref{theo:perf2} follows closely the proof for stability and is omitted due to lack of space.
\end{proof}

\subsection*{Simulation: Stable Plant}

We illustrate the applicability of the MMAC-SVO with an example. Recall the MSD plant depicted in Fig. \ref{fig:MSD}. We use a sampling time of $T_c = 1$ ms for the controllers, and a sampling time of $T_s = 500$ ms for the SVOs. The discretization of the plant is done based upon the methodology previously described.
The uncertainty region considered is $K = \left[0.25, \, 9.25\right]$ N/m. 
This (large) region was divided into the following $3$ regions, $K_i$, for $i = \{1,2,3\}$, and, for each region, a mixed-$\mu$ controller that is able to guarantee a certain level of performance was synthesized:
$
K_1 = \left[0.25, \,  5.0\right],\,
K_2 = \left[5.0, \, 7.0 \right],\,
K_3 = \left[7.0, \, 9.25 \right].
$

In the following simulations, we consider that the initial LNARC selected is $C_1(\cdot)$, that is, the mixed-$\mu$ controller designed to guarantee robust performance for the uncertainty region $K_1$. If SVO $\#1$ fails, then we switch to controller $C_2(\cdot)$. If SVO $\#2$ also fails, we switch to controller $C_3(\cdot)$. Notice that at least one of three SVOs cannot fail.

The first simulation was obtained by using $k_1 \in K_1$. Since we start-up with controller $C_1(\cdot)$, there is no need for switching. The result is depicted in Fig. \ref{fig:y_sim1stab}.

\begin{figure}[!htbp]
	\centering
		\includegraphics[width=2.5in]{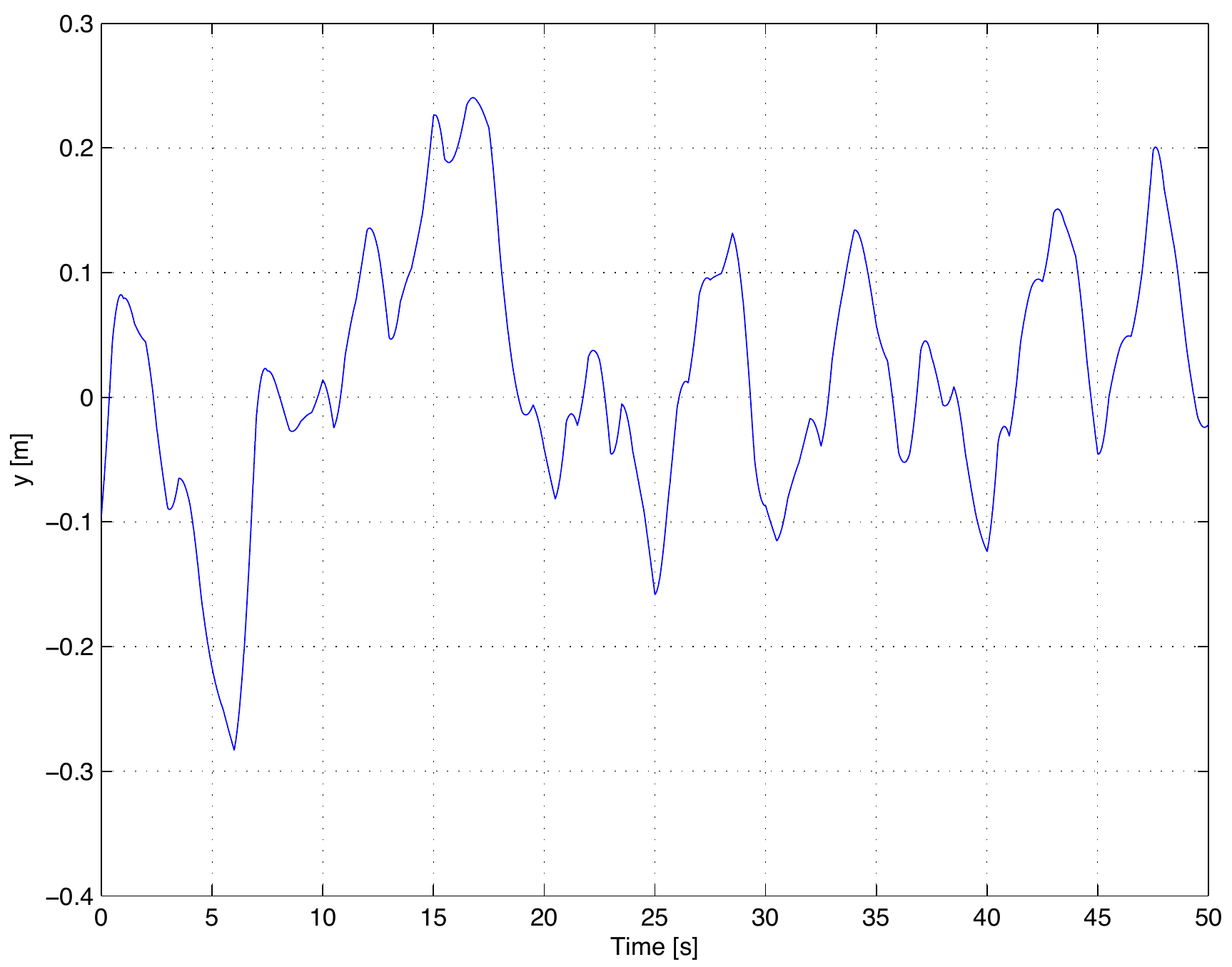}
	\caption{Output of the closed-loop for the MSD-plant with positive $k_1 \in K_1$.}
	\label{fig:y_sim1stab}
\end{figure}

For the second simulation run, we used $k_1 \in K_2$. The initial controller is $C_1(\cdot)$, \emph{i.e.}, we assume that the uncertain plant belongs to $K_1$. However, after two sampling times of the identification scheme, $T_s = 500$ ms, the SVO $\#1$ is not able to explain the measured output, and hence we switch to controller $C_2(\cdot)$. Since the SVO $\#2$ does not fail, we continue using controller $C_2(\cdot)$, as depicted in Fig. \ref{fig:y_sim2stab}.

\begin{figure}[!htbp]
	\centering
		\includegraphics[width=2.5in]{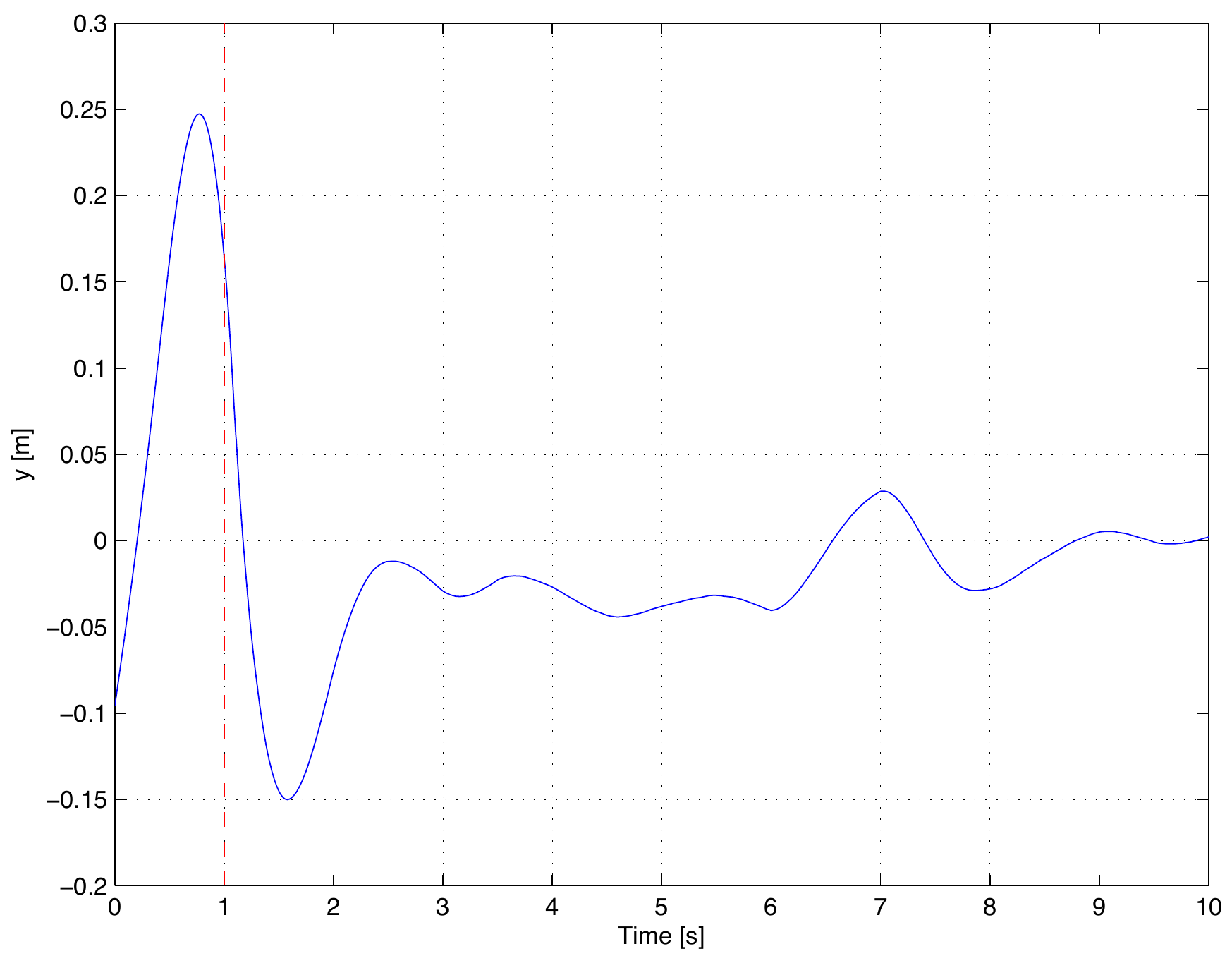}
	\caption{Output of the closed-loop for the MSD-plant with positive $k_1 \in K_2$. The red dashed line indicates the time instant at which the SVO $\#1$ failed, and hence the logic switched to controller $\#2$.}
	\label{fig:y_sim2stab}
\end{figure}

For the last simulation, we used a model with $k_1 \in K_3$. As depicted in Fig. \ref{fig:y_sim3stab}, SVOs $\#1$ and $\#2$ fail at $t = 1$ s and $t = 1.5$ s, respectively. Although $C_2(\cdot)$ is able to stabilize the plant, the algorithm can still decide to switch to controller $C_3(\cdot)$, since the SVO $\#2$ fails.

\begin{figure}[!htbp]
	\centering
		\includegraphics[width=2.5in]{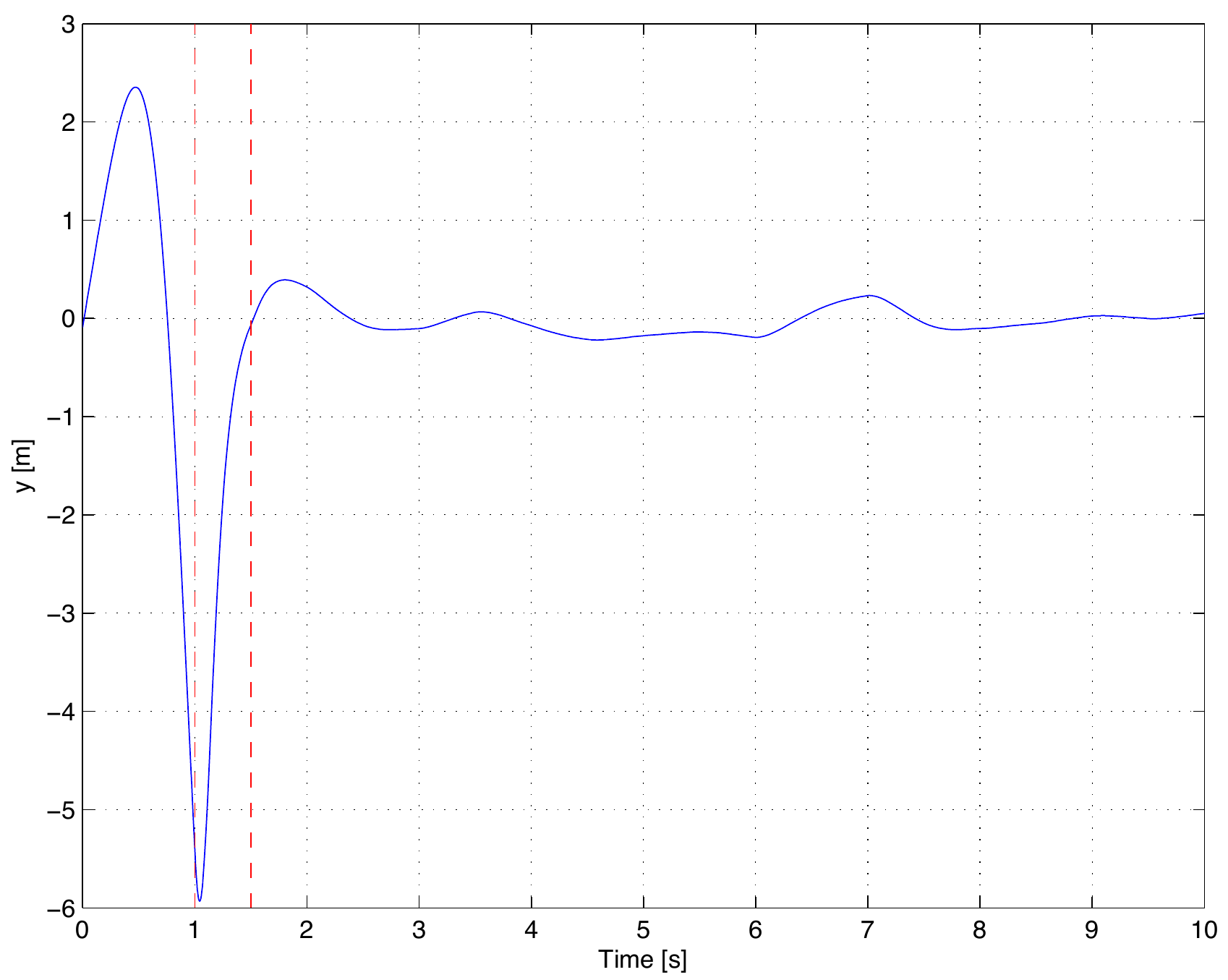}
	\caption{Output of the closed-loop for the MSD-plant with positive $k_1 \in K_3$. The red dashed lines indicate the time instants at which the SVO $\#1$ and $\#2$ failed, and hence the logic switched to controller $\#3$.}
	\label{fig:y_sim3stab}
\end{figure}

It should be noticed that, the smaller the amplitudes of the input and output signals, the harder it is, in general, to discard an uncertainty region, because the models become less distinguishable.
However, we stress that, for the values of the spring constant used, all the controllers were able to stabilize the plant. Nevertheless, the algorithm always picked the controller that provided the performance level we were expecting.

\begin{remark}
For this plant, we have to solve around $10$ to $30$ linear programs, up to $20$ times per iteration, which requires approximately $500$ ms or less in a \emph{Core 2 Duo Pentium}\texttrademark~processor at $2.0$ Ghz.
\end{remark}

\subsection*{Simulation: Unstable Plant}

In this subsection, we make the open-loop plant unstable by using a negative spring stiffness coefficient.

The uncertainty region considered is $K = \left[-42, \, -0.25\right]$ N/m.
This region was divided into the following $3$ regions, $K_i$, for $i = \{1,2,3\}$, and, for each region, a mixed-$\mu$ controller that is able to guarantee a certain level of performance was synthesized:
$
K_1 = \left[-1.65, \,  -0.25\right],\,
K_2 = \left[-13.5, \, -1.65 \right],\,
K_3 = \left[-42, \, -13.5 \right].
$

As in the previous subsection, in the following simulations, we consider that the initial controller selected is $C_1(\cdot)$.
The first simulation was obtained by using $k_1 \in K_1$. Since we start-up with controller $C_1(\cdot)$, there is no need for switching, as depicted in Fig. \ref{fig:y_sim1}.

\begin{figure}[!htbp]
	\centering
		\includegraphics[width=2.5in]{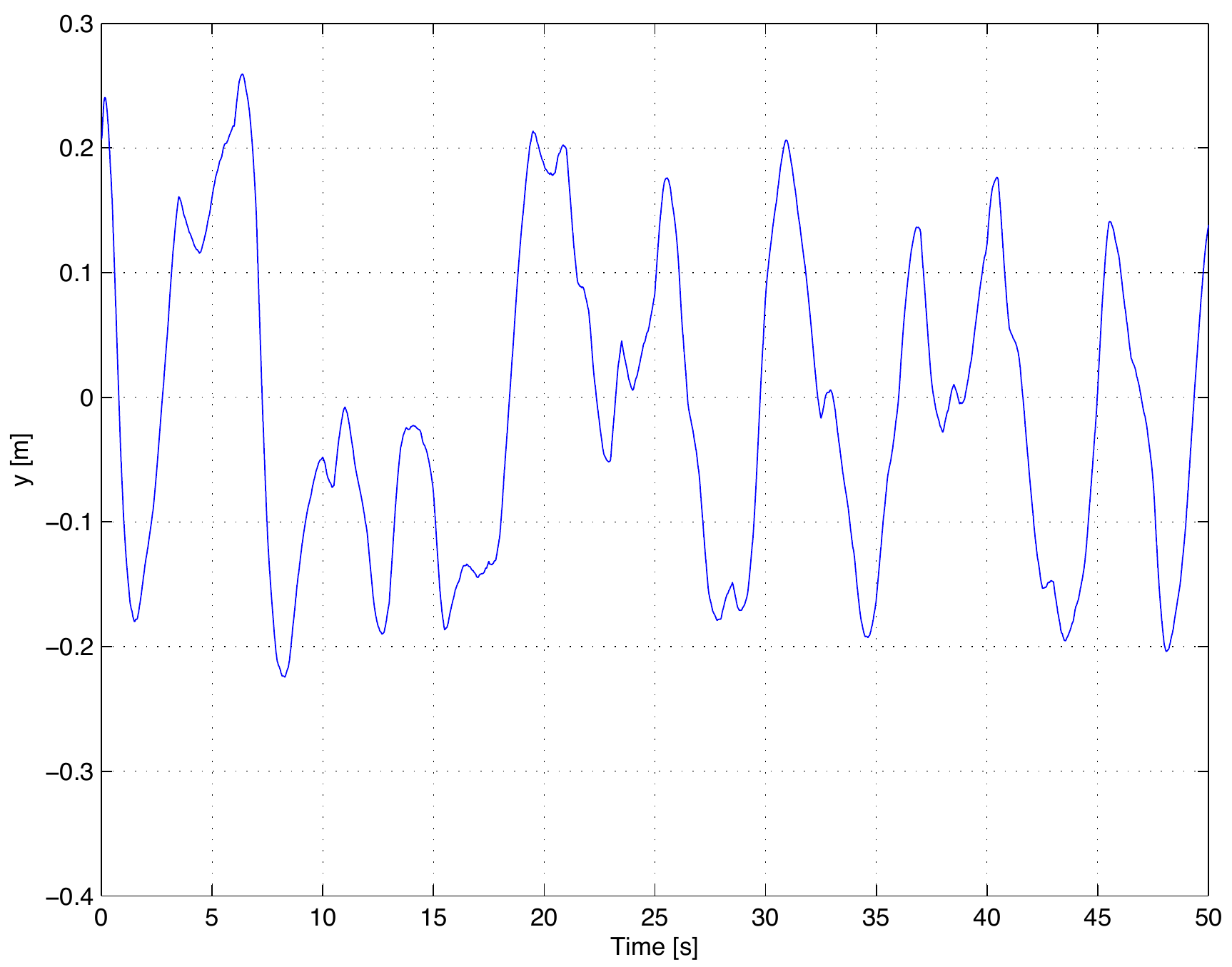}
	\caption{Output of the closed-loop for $k_1 \in K_1$ and open-loop unstable MSD-plant.}
	\label{fig:y_sim1}
\end{figure}

For the second simulation run, we used $k_1 \in K_2$.
The initial controller is $C_1(\cdot)$, \emph{i.e.}, we assume that the uncertain plant belongs to $K_1$. However, after $1$ sampling interval of the identification scheme, $T_s = 500$ ms, SVO $\#1$ is not able to explain the measured output, as shown in Fig. \ref{fig:y_sim2}, and hence we switch to controller $C_2(\cdot)$. Since the SVO $\#2$ does not fail, we continue using controller $C_2(\cdot)$.

\begin{figure}[!htbp]
	\centering
		\includegraphics[width=2.5in]{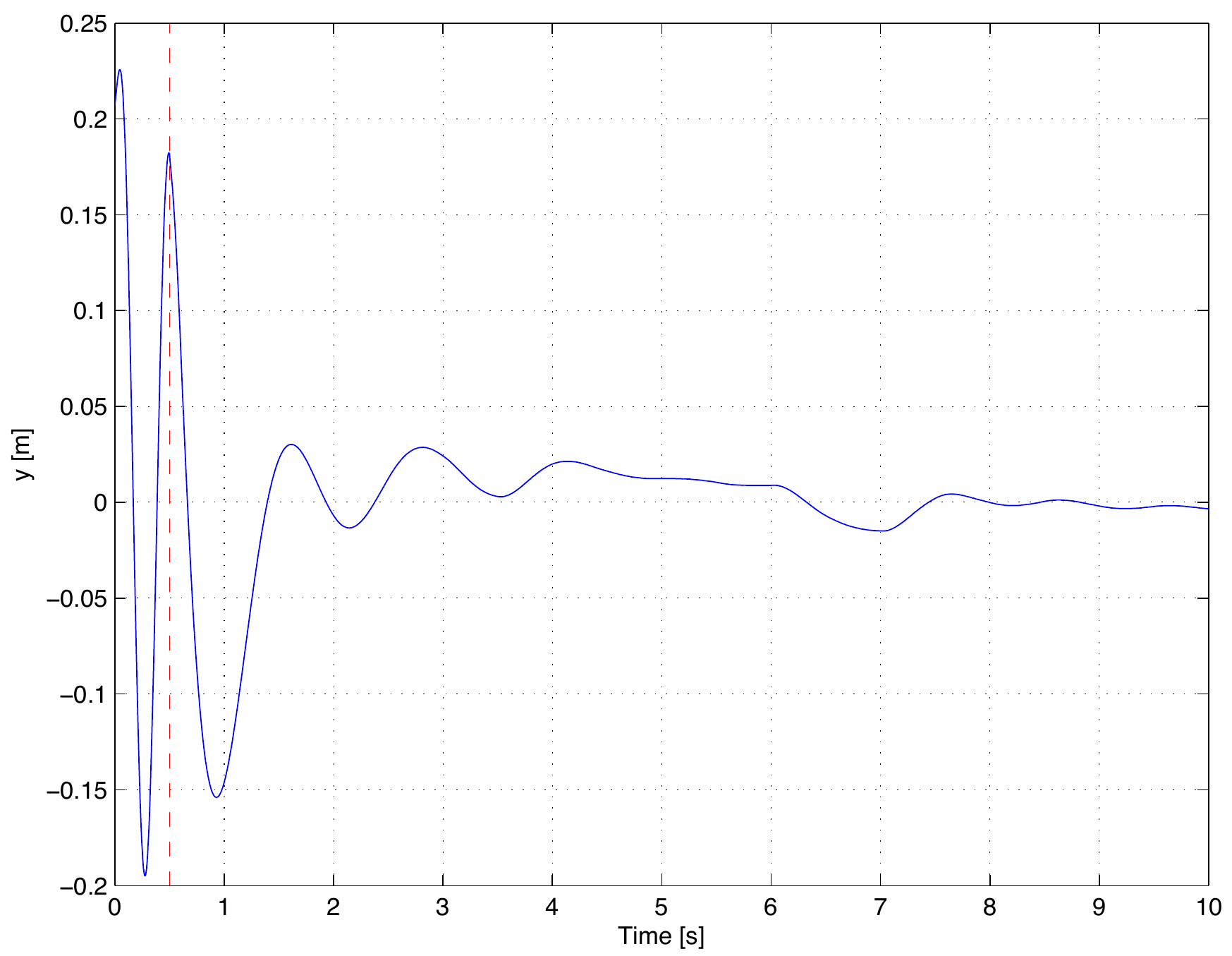}
	\caption{Output of the closed-loop for $k_1 \in K_2$ and open-loop unstable MSD-plant. The red dashed line indicates the time instant at which the SVO $\#1$ failed, and hence the logic switched to controller $\#2$.}
	\label{fig:y_sim2}
\end{figure}

For the last simulation, depicted in Fig. \ref{fig:y_sim3}, we used a model with $k_1 \in K_3$. The SVOs $\#1$ and $\#2$ fail at $t = 500$ ms and $t = 1$ s, respectively. 

\begin{figure}[!htbp]
	\centering
		\includegraphics[width=2.5in]{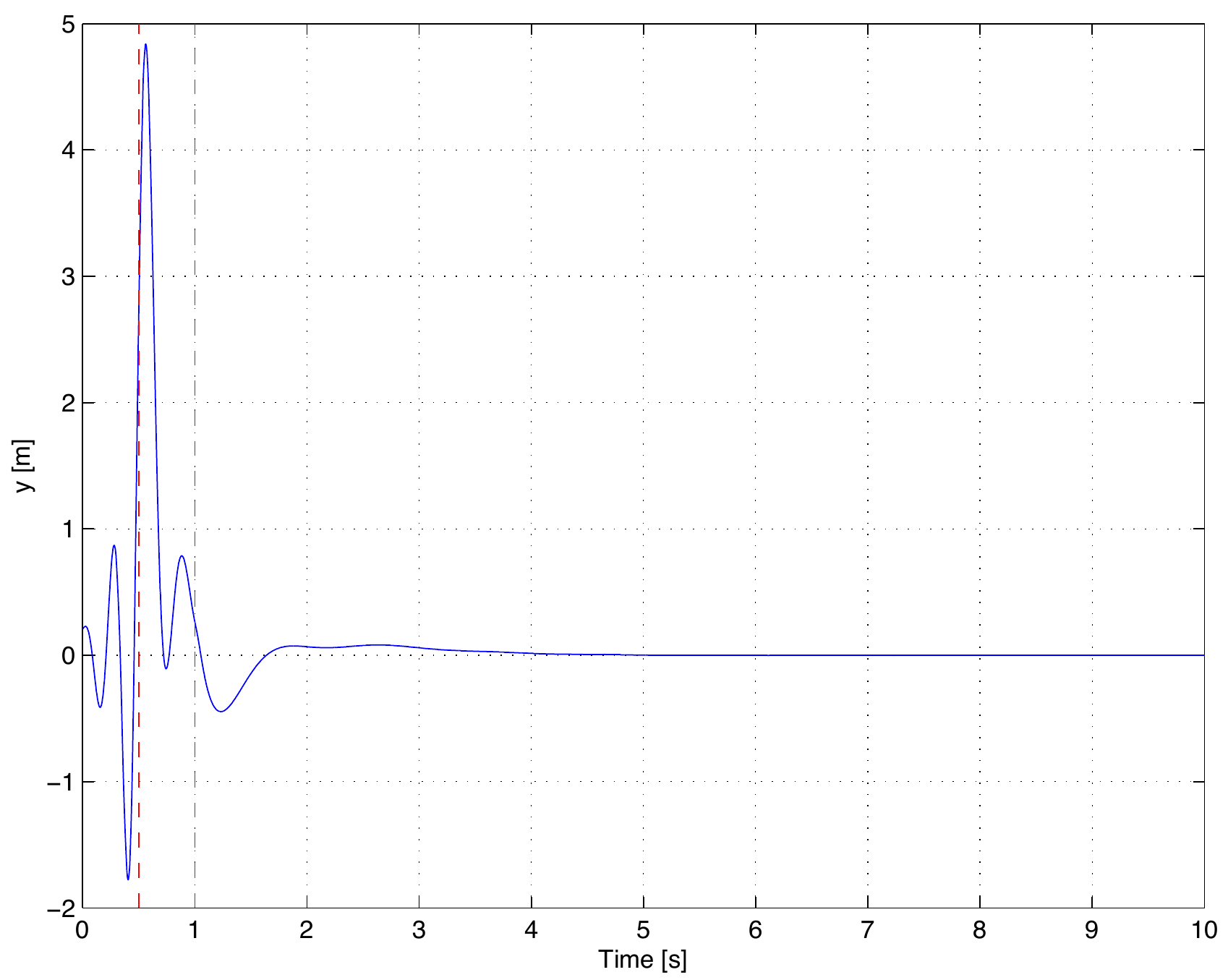}
	\caption{Output of the closed-loop for $k_1 \in K_3$ and open-loop unstable MSD-plant. The red dashed lines indicates the time instants at which the SVO $\#1$ and $\#2$ failed, and hence the logic switched to controller $\#3$.}
	\label{fig:y_sim3}
\end{figure}

It should be noticed that, the smaller the amplitudes of the input and output signals, the harder it is, in general, to discard an uncertainty region, because the differences between models become less visible.

\section{Conclusions}
\label{sec:conc}

This paper illustrated how to use set-valued observers (SVOs) in a multiple-model adaptive control (MMAC) architecture, providing robust stability and performance guarantees for plants with uncertain models. The solution presented can be obtained in a straightforward manner from the discrete-time state-space representation of the plant model. This MMAC architecture is able to handle both stable and unstable uncertain systems, as illustrated with the simulation of a mass-spring-dashpot (MSD) plant.

In this paper, the applicability of the SVOs was extended to plants with parametric uncertainty. We further overcame some numerical issues related to the implementation of the SVOs, by making the algorithms involved more robust to numerical errors.

As a shortcoming of this approach, the computational burden associated with the implementation of the SVOs is highlighted.

\section{ACKNOWLEDGMENTS}

We wish to thank our colleagues Antonio Pascoal, Pedro Aguiar, Vahid Hassani and Jos{\'e} Vasconcelos for the many discussions on the field of robust adaptive control.

\bibliographystyle{IEEEtran}
\bibliography{references}

\end{document}